\newtheorem{definition}{Definition}
\newtheorem{theorem}{Theorem}
\newtheorem{example}{Example}
\newtheorem{assumption}{Assumption}
\newtheorem{lemma}{Lemma}
\newtheorem{remark}{Remark}
\def\Rbb{\mathbb{R}}
\def\<{\langle}
\def\>{\rangle}
\title{Affine Invariant Divergences associated with Composite Scores and its Applications}
\author{
  Takafumi Kanamori\\ Nagoya University \\ \tt{kanamori@is.nagoya-u.ac.jp}
  \and
  Hironori Fujisawa\\ The Institute of Statistical Mathematics\\ \tt{fujisawa@ism.ac.jp}
 }
\begin{document}
\maketitle

\begin{abstract}
In statistical analysis, measuring a score of predictive performance is an important task. 
In many scientific fields, appropriate scores were tailored to tackle the problems at
hand. A proper score is a popular tool to obtain statistically consistent forecasts. 
Furthermore, a mathematical characterization of the proper score was studied. 
As a result, it was revealed that the proper score corresponds to a Bregman divergence, 
which is an extension of the squared distance over the set of probability distributions. 
In the present paper, 
we introduce composite scores as an extension of the typical scores 
in order to obtain a wider class of probabilistic forecasting. 
Then, we propose a class of composite scores, named H\"{o}lder scores, 
that induce equivariant estimators. 
The equivariant estimators have a favorable property, implying that the estimator is 
transformed in a consistent way, when the data is transformed. 
In particular, we deal with the affine transformation of the data. 
By using the equivariant estimators under the affine transformation, 
one can obtain estimators that do no essentially depend on the choice of the system of
 units in the measurement. Conversely, we prove that the H\"{o}lder score is characterized by 
the invariance property under the affine transformations. 
Furthermore, we investigate statistical properties of the estimators using 
H\"{o}lder scores for the statistical problems including estimation of regression 
functions and robust parameter estimation, 
and illustrate the usefulness of the newly introduced scores for statistical forecasting. 

\end{abstract}
keywords: 
composite score; divergence; Bregman score; H\"{o}lder score; affine invariance.

\section{Introduction}\label{sec:Introduction}

In statistical analysis, an important task is to measure a score or a loss of the
prediction performance.  
In many fields in which probabilistic forecasting is required, 
appropriate scoring rules or loss functions are tailored to tackle the scientific problems at hand, 
e.g., weather and climate prediction \cite{bremnes04:_probab_forec_precip_terms_quant,brier50:_verif}, 
computational finance \cite{duffie97:_overv_value_risk}, and so forth. 

Under an uncertain situation, the prediction is described by using the probability
distribution. The probability distribution for the prediction is expected to put much 
weight to outcomes that are likely to materialize in the future. 
Hence, the score is formalized as a function taking two inputs, i.e., 
a probability distribution for the prediction and an outcome. 
In order to achieve high prediction performance on average, 
ideally, optimization of the expected score is conducted. 
When the identically and independently distributed (i.i.d.) samples are available, 
the expected score is approximated by the empirical mean over the samples. 
By optimizing the empirical mean of the score over a statistical model for the
prediction, one will obtain a probability distribution attaining high prediction
performance. 

The above statistical procedure is formalized as the statistical inference using the
scores or scoring rules
\cite{brier50:_verif,dawid07,gneiting07:_stric_proper_scorin_rules_predic_estim,hendrickson71:_proper_scores_probab_forec}. 
We regard the score as a loss to be minimized. 
The estimator obtained from the score is called the optimum score estimator. 
To obtain a good estimator, scores need to satisfy some assumptions. 
A typical assumption is that the score is proper. 
Given a probability distribution of outcomes, 
the optimal value of the expected proper 
score is attained by setting the prediction probability to be the true probability
distribution. Under mild assumptions, optimization of the proper score averaged over the
observed samples produces a statistically consistent estimator. 
The proper score is a special case of M-estimation \cite{huber64:_robus}, 
and the statistical property of the proper score have been studied in the framework of
M-estimators \cite[Chap.\,5]{vaart00:_asymp_statis}.

The proper score is a basic element that yields important concepts in statistical 
inference. According to \cite{dawid07}, the proper score introduces a divergence, 
which is a discrepancy measure between two probability distributions. 
The divergence is regarded as a generalization of the (squared) distance, and 
induces a sort of topological structure over the statistical model. 
As a result, geometrical structures such as the Riemannian metric and affine connection
are defined over the geometrical space consisting of probability distributions. 
Such a geometrical structure is closely related to the statistical properties of the estimator
using the corresponding proper score. 
Bregman divergence \cite{bregman67:_relax_method_of_findin_commonc}
is an important class of divergences, since it is closely related to
the proper score. 
A major milestone in the theoretical approach 
is the characterization of the proper score by using the Bregman divergence 
\cite{abernethy12:_charac_scorin_rules_linear_proper,%
gneiting07:_stric_proper_scorin_rules_predic_estim,hendrickson71:_proper_scores_probab_forec}. 
More precisely, any proper score produces a Bregman divergence, and 
a given Bregman divergence yields a proper score. 
The correspondence established a way to investigate the proper score 
by using the Bregman divergence on statistical models.

In the present paper, we introduce composite scores as an extension of the proper scores
in order to obtain a wider class of probabilistic forecasting. 
Then, we propose a class of composite scores, named H\"{o}lder scores, that induce
equivariant estimators \cite{berger85:_statis_decis_theor_bayes_analy}. 
The equivariant estimator is a class of estimators having a favorable property, 
implying that the estimator is transformed in a consistent way, when the data is transformed. 
In particular, we deal with the affine transformation of the data, i.e., 
$\omega\mapsto\sigma^{-1}(\omega-\mu)$ for the data $\omega\in\Rbb^d$, 
where $\sigma$ is a $d$ by $d$ invertible matrix and $\mu$ is a $d$-dimensional vector. 
The normalization of data is a typical example of affine transformations. 
Each element of the normalized data has zero sample-mean and unit sample-variance. 
Thus, for the normalized data, the statistical comparison of each component is
reasonable. 
As an example of the equivariant estimators under the affine transformation, 
let us consider the estimation of the mean value $\theta$ of a one-dimensional probability
distribution. 
When all samples are transformed from $\omega\in\Rbb$ into $\sigma^{-1}{}(\omega-\mu)$
with the constants $\mu\in\Rbb$ and $\sigma\neq0$, 
also the estimator $\widehat{\theta}$ of the mean value 
$\theta$ should be transformed into $\sigma^{-1}(\widehat{\theta}-\mu)$.  
By using the equivariant estimators under the affine transformation, 
the estimate does not essentially depend on the choice of the system of units in the
measurement. 
In addition, we show a characterization of the H\"{o}lder scores. 
Similarly to the correspondence between the proper scores and the Bregman divergences, 
the composite scores correspond to a class of divergences. 
When the divergence is invariant under the data transformation, 
the corresponding composite score provides an equivariant estimator. 
We prove that the H\"{o}lder score is characterized by the affine invariance of the
associated divergence, i.e., among a class of composite scores, 
only H\"{o}lder score provides the equivariant estimator under affine transformations. 
Furthermore, we investigate statistical properties of the estimators derived from 
H\"{o}lder scores for the statistical problems including estimation of regression 
functions and robust parameter estimation.

As pointed out in \cite{bremnes04:_probab_forec_precip_terms_quant}, scores of continuous
variables have so far received little attention. In this paper, our main concern is the
scores of continuous variables. 
The invariance under affine transformations is a specific property for continuous
variables.

The remainder of the article is organized as follows. 
In Section~\ref{sec:Proper_Scoring_Rules_and_Divergences}, 
we define composite scores and associated divergences. 
Bregman scores and its separable variant are also introduced as an important class of
composite scores. 
Then, we show a way to use composite scores to probabilistic forecasting. 
In Section~\ref{sec:Holder-score}, 
we define H\"{o}lder scores, and demonstrate the relation between H\"{o}lder scores and Bregman scores. 
In Section~\ref{sec:Affine_invariance_Holder-div}, 
we define the affine invariance of divergences, and show that 
the H\"{o}lder score induces the affine invariant divergences and equivariant estimators. 
Conversely, we prove that 
H\"{o}lder score is characterized by the affine invariance of the associated divergence. 
In Section~\ref{sec:Statistical_Inference_using_Holder_score}, 
the H\"{o}lder score is used to statistical problems including regression problems and
robust estimation. In particular, 
the robustness property of the H\"{o}lder score is presented. 
In Section~\ref{sec:conclusion}, we close this article with a discussion of the 
possibility of the newly introduced class of scores.

\section{Composite Scores and Associated Divergences}
\label{sec:Proper_Scoring_Rules_and_Divergences}

In this section, we define composite scores and associated divergences. 
Then, we introduce estimators using the composite scores. 

Let us summarize the notations to be used throughout the paper. 
Let $\Rbb$ be the set of all real numbers. 
The non-negative numbers are denoted as
$\Rbb_+=\{x\in\Rbb\,|\,x\geq0\}$. 
The interior set of a set $A$ is denoted as $A^\circ$. 
Thus, $\Rbb_+^\circ$ implies the set of all positive real numbers, i.e.,
$\Rbb_+^\circ=\{x\in\Rbb\,|\,x>0\}$. 
For a sample space $\Omega$, let $\mathcal{B}$ be a $\sigma$-algebra of subsets of $\Omega$
and $m:\mathcal{B}\rightarrow\Rbb_+$ 
be a $\sigma$-finite measure  on $(\Omega,\mathcal{B})$. 
The set of all measurable functions on $\Omega$ is denoted as $L_0$, i.e., 
$L_0=\{f:\Omega\rightarrow\Rbb\,|\,\text{$f$ is measurable on
$(\Omega,\mathcal{B},m)$}\}$. 
For $f\in{L_0}$, the integral $\int_{\Omega}f(\omega)dm(\omega)$ is denoted as $\<f\>$. 
Let $\|\cdot\|_\alpha$ for $1\leq \alpha<\infty$ be the $L_\alpha$-norm, i.e., 
$\|f\|_\alpha=\<|f|^\alpha\>^{1/\alpha}$, and 
$\|\cdot\|_{\infty}$ be the essential sup-norm. 
For $\alpha\geq1$, let $L_{\alpha}$ be
$L_\alpha=\left\{f\in{}L_0\,\big|\,\|f\|_\alpha<\infty\right\}$. 
For $\alpha=0$ or $\alpha\geq1$, $L_\alpha^+$ denotes the set of all non-negative and non-zero
functions in $L_\alpha$, i.e., 
$L_\alpha^+=\left\{ f\in{}L_\alpha\,|\,f\geq0, f\neq0 \right\}$. 
Provided a set of measurable and non-negative functions $\mathcal{F}\subset{}L_0^+$, 
$\mathcal{P}$ denotes the set of probability densities in $\mathcal{F}$, i.e., 
$\mathcal{P}=\{p\in\mathcal{F}\,|\,\<p\>=1\}$. 
For a differentiable function $\psi$, 
$\psi_i$ with the integer $i$ denotes the partial derivative of $\psi$ with respect to the
$i$-th argument, e.g., for $\psi(x,y)$, $\psi_1$ and $\psi_2$ denote
$\frac{\partial\psi}{\partial{x}}$ and $\frac{\partial\psi}{\partial{y}}$, respectively.

\subsection{Definitions}
\label{subsec:Scores_and_divergences}

Let us consider the probabilistic forecasts on a measurable space 
$(\Omega,\mathcal{B},m)$. 
Suppose that the probabilistic forecast is given by a probability density $q\in{L_1^+}$
satisfying $\<q\>=1$. 
For an outcome $\omega\in\Omega$, 
let $S_0(\omega,q)$ be a score of the forecast using $q$. 
When the probability density of the outcome is $p$, 
the expected score is given as 
\begin{align*}
 S_0(p,q):=\int_\Omega{}S_0(\omega,q)p(\omega)dm(\omega). 
\end{align*}
Suppose that the expected score satisfies the inequality 
$S_0(p,q)\geq{}S_0(p,p)$. 
Then, the minimization of the empirical mean of $S_0(\omega,q)$ over the statistical
model $q$ is expected to provide a good estimate of the probability density $p$. 
This approach is widely used in statistical inference. 
In this paper, the term score denotes the expected score $S_0(p,q)$, 
though typically the score denotes the function $S(\omega,g)$. 

Let us define a general form of scores. 
It is defined not only for probability densities but also non-negative functions. 
\begin{definition}[composite score]
 \label{def:scoring-rule}
Let $\mathcal{F}$ be a convex subset in $L_0^+$, and 
the set of probability densities in $\mathcal{F}$ is denoted as $\mathcal{P}$, i.e.,
$\mathcal{P}=\left\{p\in\mathcal{F}\,\big|\,\<p\>=1\right\}$. 
The function $S(f,g):\mathcal{F}\times\mathcal{F}\rightarrow\Rbb$ is called the composite
score on $\mathcal{F}$ if the following three conditions are satisfied: 
\begin{enumerate}
 \item $S(f,g)$ is of the form 
       \begin{align}
	\label{eqn:scoring-rule-expression}
	S(f,g)=T\left(\int_\Omega{}S_0(\omega,g) f(\omega)dm(\omega),\, g\right), 
       \end{align}
       where $S_0:\Omega\times\mathcal{F}\rightarrow{\Rbb}$ and
       $T:{\Rbb}\times\mathcal{F}\rightarrow{\Rbb}$. 
       The function $S_0(\cdot,g)f(\cdot)$ is assumed to be integrable for all
       $f,g\in\mathcal{F}$. 
 \item $S(f,g)\geq{}S(f,f)$ for all $f,g\in\mathcal{F}$. 
 \item For $p,q\in\mathcal{P}$, $S(p,q)=S(p,p)$ implies $p=q$ (almost surely). 
\end{enumerate}
\end{definition}
When the composite score $S(f,g)$ is defined only on the set of probability densities and 
the function $T$ is given as $T(c,g)=c$, 
the composite score is reduced to the expectation of a strictly proper score 
\cite{dawid07,%
gneiting07:_stric_proper_scorin_rules_predic_estim,%
gr03:_game_theor_maxim_entrop_minim,%
hendrickson71:_proper_scores_probab_forec}. 
Hence, the above definition is an extension of the strictly proper score. 
In Section \ref{sec:Holder-score}, we propose a class of composite scores 
with a non-trivial $T$.



\begin{remark}
 In our definition, the domain of the composite score is not necessarily a set of
 probability densities, but it can be a set of non-negative functions. 
 Likewise, in \cite{hendrickson71:_proper_scores_probab_forec}, the strictly proper scores are
 characterized on the set of non-negative functions. 
 The definition in the present paper simplifies mathematical analysis on composite scores. 
\end{remark}

\begin{definition}[divergence]
 Let $S$ be a composite score on $\mathcal{F}$. Then, we call 
 \begin{align*}
  D(f,g)=S(f,g)-S(f,f),\quad{} f,g\in\mathcal{F}, 
 \end{align*}
 the divergence associated with $S$. 
\end{definition}
By the definition of the composite score, 
the divergence $D(f,g)$ is nonnegative for all $f,g\in\mathcal{F}$, 
and the equality $D(p,q)=0$ for $p,q\in\mathcal{P}$ implies $p=q$. 

\subsection{Bregman scores}
\label{subsec:Bregman_scores}

As an important class of composite scores, we introduce a Bregman score and its separable variant. 
Under a mild assumption, any strictly proper score on $\mathcal{P}$ is expressed as a
Bregman score on $\mathcal{P}$ 
\cite{abernethy12:_charac_scorin_rules_linear_proper,%
gneiting07:_stric_proper_scorin_rules_predic_estim,hendrickson71:_proper_scores_probab_forec}. 
\begin{definition}[Bregman score]
 For a convex set $\mathcal{F}\subset{}L_0^+$, 
 let us define $G:\mathcal{F}\rightarrow\Rbb$ as a convex function such that 
 $G$ is strictly convex on $\mathcal{P}=\{p\in\mathcal{F}\,|\,\<p\>=1\}$. 
 Suppose that there exists a function $G^*_g:\Omega\rightarrow\Rbb$
 depending on 
 $g\in\mathcal{F}$ such that
 \begin{align*}
  G(f)\geq{}
  G(g)
  +\int_{\Omega}G^*_g(\omega)f(\omega)dm(\omega)
  -\int_{\Omega}G^*_g(\omega)g(\omega)dm(\omega),
  \quad\text{for}\ f,g\in\mathcal{F} 
 \end{align*}
 holds, where the integrals are assumed to be finite. 
 Then, 
 the Bregman score $S(f,g)$ on $\mathcal{F}$ is defined as 
 \begin{align*}
  S(f,g)=-G(g)
  -\int_{\Omega}G^*_g(\omega)f(\omega)dm(\omega)
  +\int_{\Omega}G^*_g(\omega)g(\omega)dm(\omega),
  \quad\text{for}\ f,g\in\mathcal{F}. 
\end{align*}
 The function $G$ is referred to as the potential function of the Bregman score, and 
 it satisfies $G(f)=-S(f,f)$. 
 The Bregman divergence is the divergence associated with the Bregman score. 
\end{definition}

From the definition, the Bregman score satisfies $S(f,g)\geq{}S(f,f)$ for all
$f,g\in\mathcal{F}$. 
The strict convexity of $G$ on $\mathcal{P}$ ensures that the Bregman score satisfies the
third condition of Definition~\ref{def:scoring-rule}; see Theorem~1 of 
\cite{gneiting07:_stric_proper_scorin_rules_predic_estim}. 
The function $G_g^*$ corresponds to the subgradient of $G$ at $g\in\mathcal{F}$. 
The rigorous definition of $G_g^*$ requires the dual space of a Banach space in $L_0^+$.
See \cite[Chap.~4]{borwein05:_techn} for sufficient conditions of the existence of $G^*_g$. 
To avoid technical difficulties, we assume the existence of $G^*_g$ in the above 
definition. 
The Bregman score is represented as the composite score 
\eqref{eqn:scoring-rule-expression} 
with 
\begin{align*}
 S_0(\omega,g)=-G_g^*(\omega)\ \ \text{and}\ \ 
 T(c,g)=c-G(g)+\<G_g^*\, g\>. 
\end{align*}
When the Bregman score is defined on the set of probability densities, setting
$S_0(\omega,g)=-G_g^*(\omega)-G(g)+\<G_g^*\, g\>$ and $T(c,g)=c$ is also a valid
choice. This implies that the Bregman score on $\mathcal{P}$ is represented as a strictly
proper score. 

The separable variant of the Bregman score is defined as follows. 
\begin{definition}
 [separable Bregman score]
 Let $J:\Rbb_+\rightarrow\Rbb$ be a strictly convex function. 
 The Bregman score with the potential function $G(f)=\<J(f)\>$ is called 
 the separable Bregman score. 
 The separable Bregman divergence is the divergence associated with the separable Bregman
 score. 
\end{definition}
 The separable Bregman score is of the form 
 \begin{align*}
  S(f,g)
  =-\<J(g)\>-\<J'(g)f\>+\<J'(g)g\>
  \qquad  \text{for}\ f,g\in\mathcal{F}, 
 \end{align*}
 where $J'(z)$ is the subgradient of $J$ at $z\in\Rbb_+$.

We show some examples of Bregman scores and associated divergences. 
\begin{example}
 [Kullback-Leibler (KL) score]
 \label{example:KL-div}
 Let $\mathcal{F}$ be a subset of $L_1^+$, and suppose that $f\log{g}$ is integrable for
 all $f,g\in\mathcal{F}$. 
 The Kullback-Leibler(KL) score is defined as 
 \begin{align*}
  S(f,g)=\<-f\log{g}+g\>, \qquad f,g\in\mathcal{F}, 
 \end{align*}
 which is the separable Bregman score using the function $J(z)=z\log{z}-z$ and 
the potential function $G(f)=\<f\log{f}-f\>$. 
 The associated divergence is called the KL divergence. 
\end{example}

\begin{example}
 [Density power score]
 \label{example:density-power-div}
 Let $\mathcal{F}$ be $\mathcal{F}=L_{1+\gamma}^+$ for a given $\gamma>0$. 
 The density power score 
 on $\mathcal{F}$ is defined as 
 \begin{align*}
  S(f,g)=
  \<g^{1+\gamma}\>-\frac{1+\gamma}{\gamma}\<fg^\gamma\>, 
  \qquad{}f,g\in\mathcal{F}, 
 \end{align*}
 which is the separable Bregman score with $J(z)=z^{1+\gamma}/\gamma$ and the potential
 function $G(f)=\<f^{1+\gamma}\>/\gamma$. 
 The integrability of $fg^{\gamma}$ is confirmed by H\"{o}lder's inequality. 
 The associated divergence is called the density power divergence 
 \cite{a.98:_robus_effic_estim_minim_densit_power_diver,basu10:_statis_infer,jones01:_compar}. 
 When the parameter $\gamma$ in the density power divergence tends to zero, 
 the KL-divergence is recovered. 
\end{example}

\begin{example}
 [$\gamma$-score; pseudospherical score]
 \label{example:gamma-div}
 Let $\mathcal{F}$ be $\mathcal{F}=L_{1+\gamma}^+$ for a given $\gamma>0$.  
 The pseudospherical score \cite{good71:_commen_measur_infor_uncer_r}
 is defined as 
 \begin{align*}
  S(f,g)=
  -\frac{\<fg^\gamma\>}{\<g^{1+\gamma}\>^{\gamma/(1+\gamma)}}, 
  \qquad{}f,g\in\mathcal{F}, 
 \end{align*}
 which is the non-separable Bregman score with the potential function
 $G(f)=\<f^{1+\gamma}\>^{1/(1+\gamma)}=\|f\|_{1+\gamma}$. 
 For the pseudospherical score $S(f,g)$, 
 the composite score $-\frac{1}{\gamma}\log(-S(f,g))$ is called the 
 $\gamma$-score in this paper. 
 The $\gamma$-score is proposed in 
 \cite{eguchi11:_projec_power_entrop_maxim_tsall_entrop_distr,fujisawa08:_robus}, 
 and it is used for robust parameter estimation. 
 As the limiting case of $\gamma\rightarrow0$, the divergence associated with the
 $\gamma$-score recovers KL-divergence. 
\end{example}

\subsection{Optimum score estimator}
\label{subsec:Statistical_inference_using_scores}

Statistical inference using the composite score 
\eqref{eqn:scoring-rule-expression} is conducted by substituting 
the empirical probability and the model probability into the composite score. 
Provided the i.i.d. samples $\omega_1,\ldots,\omega_n$
from the probability density $p$, an empirical approximation of $S(p,q)$ for a given
probability density $q$ is given as 
\begin{align*}
 S(\widetilde{p},q)=T\left(\frac{1}{n}\sum_{i=1}^{n}S_0(\omega_i,q),\,q \right), 
\end{align*}
where $\widetilde{p}$ denotes the empirical probability. 
For a sufficiently large number of samples, $S(\widetilde{p},q)$ converges to 
$S(p,q)$ due to the law of large numbers. 
Since $S(p,q)\geq{}S(p,p)$ is assumed, 
the estimator of $p$ is obtained as the minimum solution 
of $S(\widetilde{p},q)$ with respect to $q$ over a statistical model.
The estimator $\widehat{q}$ is called the \emph{optimum score estimator}
\cite{gneiting07:_stric_proper_scorin_rules_predic_estim}. 
The estimator using the strictly proper score is a special case of M-estimation
\cite{huber64:_robus}, and its statistical properties have been deeply investigated 
\cite{vaart00:_asymp_statis}. 

Different composite scores may produce the same estimator. 
Let us define the equivalence class on the set of composite scores 
such that the composite scores in the same
class provide the same estimator. 
\begin{definition}[equivalence of composite scores]
 \label{def:equivalence_of_scores}
 The composite scores $S(f,g)$ and $\widetilde{S}(f,g)$ on $\mathcal{F}$ are equivalent 
 if there exists a strictly increasing function $\xi:\Rbb\rightarrow\Rbb$ such
 that $\widetilde{S}(f,g)=\xi(S(f,g))$ holds for all $f,g\in\mathcal{F}$. 
 The composite scores $S(f,g)$ and $\widetilde{S}(f,g)$ on $\mathcal{F}$ are equivalent in
 probability
 if there exists a strictly increasing function $\xi:\Rbb\rightarrow\Rbb$ 
 such that $\widetilde{S}(p,q)=\xi(S(p,q))$ holds for all probability densities
 $p,q\in\mathcal{P}\subset\mathcal{F}$.
\end{definition}
For any strictly increasing function $\xi$, 
the minimum solutions of $S(p,q)$ and $\xi(S(p,q))$ with respect to $q$ are the same.  
Hence, the composite scores that are equivalent in probability provide the same estimator. 
A different definition of the equivalence class was also proposed by 
\cite{dawid98:_coher_measur_discr_uncer_depen,dawid07}, 
in which the Bregman scores $S(p,q)$ and $\widetilde{S}(p,q)$ on $\mathcal{P}$
are equivalent if there exist a positive constant $c>0$ and a function 
$k:\mathcal{P}\rightarrow\Rbb$ such that $\widetilde{S}(p,q)=c{S}(p,q)+k(p)$ holds. 
The equivalence class in Definition~\ref{def:equivalence_of_scores} is more suitable for 
our analysis.

\section{H\"{o}lder scores}
\label{sec:Holder-score}

In this section, we propose a class of composite scores, named H\"{o}lder scores, 
a part of which is not represented as the Bregman score. 
We investigate the relation between the H\"{o}lder scores and Bregman scores.

\subsection{Definition of H\"{o}lder score}
\label{subsec:Definition_of_Holder_score}

Bregman scores are widely used for statistical inference, 
\cite{banerjee05:_clust_bregm_diver,%
Collins_etal00,murata04:_infor_geomet_u_boost_bregm_diver,%
tsuda05:_matrix_expon_gradien_updat_learn_bregm_projec},
since 
one can substitute the empirical probability distribution into the Bregman score. 
Under a regularity condition, Bregman scores produce statistically consistent estimators
based on the outcomes. 
Especially, the density power score and $\gamma$-score are used for robust estimation
\cite{a.98:_robus_effic_estim_minim_densit_power_diver,fujisawa08:_robus}. 
In this section, we propose a class of composite scores called \emph{H\"{o}lder scores} 
that include both the density power score and $\gamma$-score. 
One can also substitute the empirical probability distribution into the H\"{o}lder score. 
As shown later, 
the H\"{o}lder score is not included in the class of Bregman scores, and has 
a relation to affine invariant estimators. 

\begin{definition}[H\"{o}lder score]
\label{def:Holder-score}
 The H\"{o}lder score with a nonnegative parameter $\gamma$ is defined as follows:
 \begin{enumerate}
  \item For a given $\gamma>0$, let $\phi:\Rbb_+\rightarrow\Rbb$  be a function such that
	$\phi(z)\geq-z^{1+\gamma}$ for all $z\geq0$ and $\phi(1)=-1$ hold. 
	Then, for $\mathcal{F}=L_{1+\gamma}^+$, 
	the H\"{o}lder score is defined as 
	\begin{align*}
	 S(f,g)=
	 \phi\bigg(\frac{\<fg^\gamma\>}{\<g^{1+\gamma}\>}\bigg)\<g^{1+\gamma}\>,\quad
	 f,g\in\mathcal{F}. 
	\end{align*}
  \item For $\gamma=0$, the H\"{o}lder score is defined as 
	\begin{align*}
	 S(f,g)=\<-f\log{g}+g\>, \quad f,g\in\mathcal{F}, 
	\end{align*}
	where $\mathcal{F}$ is a subset of $L_1^+$ such that
	$f\log{g}$ is integrable for all $f,g\in\mathcal{F}$. 
 \end{enumerate}
 The associated divergence $D(f,g)=S(f,g)-S(f,f)$ is called the H\"{o}lder divergence. 
 \end{definition}

The H\"{o}lder score with $\gamma=0$ is nothing but the KL score. 
An appropriate choice of the function $\phi$ produces the composite score equivalent with
the density power score or $\gamma$-score. 
Indeed, the H\"{o}lder score with the lower bound $\phi(z)=-z^{1+\gamma}$ is 
$S(f,g)=-\<fg^\gamma\>^{1+\gamma}/\<g^{1+\gamma}\>^\gamma$
which is equivalent with $\gamma$-score. 
The density power score is equivalent with the H\"{o}lder score with
$\phi(z)=\gamma-(1+\gamma)z$. 

We prove the basic property that the H\"{o}lder score satisfies the condition of 
the composite score in Definition~\ref{def:scoring-rule}. 
\begin{theorem}
 \label{theorem:Holder-score-well-def}
 The H\"{o}lder score is a composite score. 
\end{theorem}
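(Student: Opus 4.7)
The plan is to verify the three conditions of Definition~\ref{def:scoring-rule} separately for the two cases $\gamma=0$ and $\gamma>0$. The case $\gamma=0$ coincides with the KL score, already identified as a separable Bregman score in Example~\ref{example:KL-div}; since every Bregman score is a composite score (as noted right after the definition of the Bregman score), no further argument is needed for $\gamma=0$. I concentrate on $\gamma>0$.

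For condition~1, I would take $S_0(\omega,g)=g(\omega)^\gamma$ and $T(c,g)=\phi(c/\<g^{1+\gamma}\>)\<g^{1+\gamma}\>$, so that $\<S_0(\cdot,g)f\>=\<fg^\gamma\>$ and the H\"older score fits the form \eqref{eqn:scoring-rule-expression}; note that $\<g^{1+\gamma}\>>0$ because $g\in L_{1+\gamma}^+$ is non-zero. Integrability of $fg^\gamma$ for $f,g\in L_{1+\gamma}^+$ is immediate from H\"older's inequality with conjugate exponents $1+\gamma$ and $(1+\gamma)/\gamma$. For condition~2, observe $S(f,f)=\phi(1)\<f^{1+\gamma}\>=-\<f^{1+\gamma}\>$. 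Applying the assumption $\phi(z)\geq -z^{1+\gamma}$ at $z=\<fg^\gamma\>/\<g^{1+\gamma}\>$ gives
\[
S(f,g)\geq -\frac{\<fg^\gamma\>^{1+\gamma}}{\<g^{1+\gamma}\>^{\gamma}},
\]
and the same H\"older inequality, rewritten as $\<fg^\gamma\>^{1+\gamma}\leq\<f^{1+\gamma}\>\<g^{1+\gamma}\>^{\gamma}$, bounds the right-hand side from below by $-\<f^{1+\gamma}\>=S(f,f)$.

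For condition~3, suppose $p,q\in\mathcal{P}$ satisfy $S(p,q)=S(p,p)$. Chasing equality through the two-step chain used for condition~2, both inequalities must collapse to equalities, and in particular H\"older's inequality applied to $p$ and $q^\gamma$ must be sharp. This forces $p^{1+\gamma}$ and $q^{1+\gamma}$ to be proportional almost everywhere, and since both $p$ and $q$ integrate to one, the proportionality constant equals $1$, so $p=q$ almost surely. The main obstacle will be making this third step fully rigorous: one must propagate the scalar equality $S(p,q)=S(p,p)$ back through the composition of the $\phi$-bound and H\"older's inequality, and verify that the sharp case of H\"older (possibly with $p$ or $q$ vanishing on sets of positive measure) indeed yields $p=q$. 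A useful observation is that the normalization $\phi(1)=-1$ is not cosmetic: it is precisely what makes the two separate inequalities combine into an identity at $f=g$, so that no strict convexity or further regularity of $\phi$ is required beyond the pointwise bound.
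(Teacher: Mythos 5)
Your proposal is correct and follows essentially the same route as the paper's proof: the reduction of $\gamma=0$ to the KL score, the two-step bound combining $\phi(z)\geq -z^{1+\gamma}$ with H\"older's inequality for conditions 1--2, and the equality case of H\"older's inequality (linear dependence of $p$ and $q$, hence $p=q$ by normalization) for condition 3. The rigor concern you flag in the third step is handled in the paper exactly as you sketch, via the standard equality condition of H\"older's inequality, so no additional argument is needed.
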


The proof of Theorem~\ref{theorem:Holder-score-well-def} is found in 
Appendix~\ref{appendix:theorem:Holder-score-well-def}. 
The H\"{o}lder score with $\gamma>0$ is represented as the composite score 
\eqref{eqn:scoring-rule-expression} with $S_0(\omega,g)=g(\omega)^{\gamma}$ and 
$T(c,g)=\phi(c/\<g^{1+\gamma}\>)\<g^{1+\gamma}\>$. 
The name of H\"{o}lder score comes from the fact that H\"{o}lder's inequality is used to
prove the non-negativity of H\"{o}lder divergence. 
The function $S(f,f)$ is referred to as \emph{entropy}. 
The entropy of the H\"{o}lder score is $S(f,f)=-\<f^{1+\gamma}\>$, which is 
in agreement with the Tsallis entropy \cite{tsallis88:_possib_boltz_gibbs}
up to an affine transformation.

\subsection{Bregman scores and H\"{o}lder scores}
\label{subsec:Bregman_score_and_Holder_score}

Let us consider the relation between the Bregman scores and H\"{o}lder scores. 
We assume the differentiability for Bregman scores. 
The definition of the differentiability is shown below. 
\begin{definition}
 [differentiability of potential function] 
 Let $G$ be the potential function of the Bregman score 
 on the convex set $\mathcal{F}$. 
 If the limit 
 \begin{align*}
 \lim_{\varepsilon\rightarrow{0}}
 \frac{G((1-\varepsilon)f+\varepsilon{g})-G(f)}{\varepsilon} 
 \end{align*} 
 exists for any $f,g\in\mathcal{F}$ such that 
 there exists $\delta>0$ satisfying
 $(1-\varepsilon)f+\varepsilon{g}\in\mathcal{F}$ for all
 $\varepsilon\in(-\delta,\delta)$, 
 the potential function $G$ is differentiable. 
 The corresponding Bregman score (resp. divergence) is called the differentiable Bregman score
 (resp. divergence). 
\end{definition}
The differentiability above makes our analysis rather simple. 
For non-differentiable Bregman scores, we will need more involved argument such as the
convex analysis in Banach spaces. 
From the practical viewpoint, differentiable Bregman scores 
will be preferable, since the standard non-linear optimization techniques are directly
applicable to obtain the optimum score estimator. 

\begin{theorem}
 \label{theorem:Bregman-Holder}
 Suppose that the function $\phi$ in the H\"{o}lder score is continuous on $\Rbb_+$. 
 \begin{enumerate}
  \item Suppose that 
	the differentiable Bregman score with the potential function $G(f)$
	is equivalent with the H\"{o}lder score with $\gamma>0$. 
	Then, $G(f)$ is given as $G(f)=\<f^{1+\gamma}\>^{\kappa/(1+\gamma)}$ 
	up to a positive constant factor, where $\kappa\geq1$. 
  \item Suppose that the differentiable and separable Bregman score with the potential 
	function $G(f)$ is equivalent with the H\"{o}lder score with $\gamma>0$. 
	Then, $G(f)$ is given as $G(f)=\<f^{1+\gamma}\>$
	up to a positive constant factor. 
 \end{enumerate}
\end{theorem}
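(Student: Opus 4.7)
\textbf{Part 1 strategy.} My plan is to progressively extract structure from the equivalence $\widetilde{S}(f,g) = \xi(S(f,g))$ between the Bregman score $\widetilde{S}$ and the H\"older score $S$, arriving at an ordinary differential equation for the function $h$ that represents $G$ on the scalar variable $v = \langle f^{1+\gamma}\rangle$. First I will set $f=g$: since $S(f,f) = \phi(1)\langle f^{1+\gamma}\rangle = -\langle f^{1+\gamma}\rangle$ and $\widetilde{S}(f,f) = -G(f)$, the equivalence forces
\begin{equation*}
G(f) = h(\langle f^{1+\gamma}\rangle), \qquad h(v) := -\xi(-v),
\end{equation*}
with $h$ strictly increasing on $(0,\infty)$. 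Next, for each fixed $g$, the Bregman score $\widetilde{S}(\cdot,g)$ depends on $f$ only through the linear functional $f\mapsto\langle G^*_g f\rangle$, while $S(\cdot,g)$ depends on $f$ only through $f\mapsto\langle fg^\gamma\rangle$; since both functionals are positively homogeneous of degree one in $f$, the two must be proportional on the cone $\mathcal{F}$, giving $\langle G^*_g f\rangle = c(g)\langle fg^\gamma\rangle$. Applying the chain rule to $G(g) = h(\langle g^{1+\gamma}\rangle)$ identifies $c(g) = (1+\gamma)h'(v)$ with $v = \langle g^{1+\gamma}\rangle$, so in particular $c(g)$ depends on $g$ only through $v$.

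\textbf{Reducing to an ODE.} Substituting the preceding identifications into the equivalence yields the functional equation
\begin{equation*}
\xi(\phi(u)v) = -h(v) + (1+\gamma)\,v\,h'(v)(1-u), \qquad u\geq 0,\ v>0.
\end{equation*}
Setting $\alpha(v):=(1+\gamma)v h'(v)$, differentiating in $u$ and in $v$, and forming the ratio so as to eliminate $\xi'$ produces an identity of the form
\begin{equation*}
\frac{1}{1+\gamma} - \frac{v\,\alpha'(v)}{\alpha(v)}(1-u) = \frac{\phi(u)}{\phi'(u)}.
\end{equation*}
Since the right-hand side depends only on $u$, separation of variables forces $v\alpha'(v)/\alpha(v)$ to be a constant $\lambda$. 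Integration gives $\alpha(v) \propto v^\lambda$ and hence $h(v) = K v^\lambda$ up to an additive constant that can be absorbed into $\xi$, so $G(f) = K\langle f^{1+\gamma}\rangle^{\kappa/(1+\gamma)} = K\|f\|_{1+\gamma}^\kappa$ with $\kappa := (1+\gamma)\lambda$. Finally, since $\|\cdot\|_{1+\gamma}$ is already a norm (convex and non-negative), convexity of the Bregman potential $G$ is equivalent to $t\mapsto t^\kappa$ being convex and non-decreasing on $[0,\infty)$, which forces $\kappa\geq 1$.

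\textbf{Part 2.} For the separable case, I will combine the conclusion $G(f) = K\|f\|_{1+\gamma}^\kappa$ from Part~1 with the separable form $G(f) = \langle J(f)\rangle$. Testing on simple functions $f = c_1\mathbf{1}_{A_1} + c_2\mathbf{1}_{A_2}$ supported on disjoint unit-measure sets expresses $G(f)$ in two ways:
\begin{equation*}
K\,(c_1^{1+\gamma}+c_2^{1+\gamma})^{\kappa/(1+\gamma)} \;=\; J(c_1) + J(c_2) + \mathrm{const}.
\end{equation*}
The right-hand side is additive in $(c_1^{1+\gamma},c_2^{1+\gamma})$; differentiating in $c_1$ shows the left-hand side contains the factor $(c_1^{1+\gamma}+c_2^{1+\gamma})^{\kappa/(1+\gamma)-1}$ which is independent of $c_2$ only when $\kappa/(1+\gamma)=1$, i.e., $\kappa=1+\gamma$, yielding $G(f) = K\langle f^{1+\gamma}\rangle$.

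\textbf{Anticipated obstacle.} The most delicate step is the proportionality $\langle G^*_g f\rangle = c(g)\langle fg^\gamma\rangle$: because $\mathcal{F}=L_{1+\gamma}^+$ is only a convex cone, the standard argument that two linear functionals coinciding up to a single scalar are proportional must be adapted to the positivity constraint, using the positive-cone scaling $f\mapsto tf$ for $t>0$ to verify degree-one homogeneity of the relevant functions. A secondary technical point is the regularity required to differentiate the functional equation; if $\phi$ is only assumed continuous, one can exploit the affine-in-$u$ structure of the right-hand side directly via finite differences to reach the same ODE for $h$.
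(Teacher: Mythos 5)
Your overall route for Part 1 coincides with the paper's: set $f=g$ to get $G(f)=h(\langle f^{1+\gamma}\rangle)$ with $h$ strictly increasing, identify $\langle G^*_g f\rangle=(1+\gamma)h'(\langle g^{1+\gamma}\rangle)\langle fg^\gamma\rangle$ (your cone-scaling justification of the proportionality is fine), and reduce the equivalence to the functional equation $h(-\phi(u)v)=h(v)-(1+\gamma)vh'(v)(1-u)$, then solve for a power law. Your Part 2, testing the separable form on two-step simple functions with disjoint supports, is a legitimate and in fact more explicit version of the paper's one-line conclusion that $\kappa=1+\gamma$.

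The gap is in how you extract the ODE. The theorem assumes only that $\phi$ is continuous, but your separation-of-variables identity uses $\phi'(u)$ and, through $\alpha'(v)$, a second derivative of $h$; neither has been established, and even the first derivative of $h$ must be deduced from the assumed differentiability of the potential $G$ (the paper does this by differentiating $\varepsilon\mapsto G((1+a\varepsilon)f)=h((1+a\varepsilon)^{1+\gamma}\langle f^{1+\gamma}\rangle)$ at $\varepsilon=0$), whereas your chain-rule identification of $c(g)$ silently presumes it. Your flagged fallback of "finite differences in $u$" is not enough as stated: evaluating at two generic values $u_1,u_2$ only yields the functional--differential relation $h(-\phi(u_1)v)-h(-\phi(u_2)v)=\alpha(v)(u_2-u_1)$, whose solution set is not confined to power functions. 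The concrete missing device is the paper's: since $\phi$ is continuous with $\phi(1)=-1$ and $\phi(z)\geq -z^{1+\gamma}$ (hence $\phi(0)\geq 0$), there exists $z_0\in[0,1)$ with $\phi(z_0)=0$; substituting $u=z_0$ collapses the functional equation to the genuine ODE $h(v)-h(0)=(1+\gamma)(1-z_0)vh'(v)$, whose solutions are exactly $h(v)=h(0)+cv^{\kappa/(1+\gamma)}$ with $\kappa=1/(1-z_0)$, and the bound $\kappa\geq 1$ then falls out of $z_0<1$ for free. By contrast, your convexity argument for $\kappa\geq 1$ is stated as an "equivalence" that is not quite one and additionally needs the monotonicity of $h$ to exclude $K<0$, $\lambda<0$. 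With the $z_0$ evaluation inserted, the rest of your plan, including Part 2, goes through.
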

The proof is shown in Appendix \ref{appendix:proof_Bregman_Holder}. 

The KL score is a differentiable and separable Bregman. 
Hence, the intersection of (separable) Bregman score and H\"{o}lder score is the KL score
or the (separable) Bregman score associated with the potential function presented in the
above theorem. 

For the potential function $G(f)=\<f^{1+\gamma}\>^{\kappa/(1+\gamma)}$ with $\gamma>0$ and
$\kappa\geq1$, 
the corresponding Bregman score is given as 
\begin{align}
 \label{eqn:Bregman-Holder-score}
 S(f,g)=\<g^{1+\gamma}\>^{\kappa/(1+\gamma)}\left(
 1-\frac{1}{\kappa}-\frac{\<fg^\gamma\>}{\<g^{1+\gamma}\>}
 \right). 
\end{align}
The above Bregman scores include the density power score ($\kappa=1+\gamma$)
and $\gamma$-score ($\kappa=1$) in each equivalent class. 
The H\"{o}lder score corresponding to the Bregman score \eqref{eqn:Bregman-Holder-score}
is given by the function $\phi(z)$ defined as 
\begin{align}
 \label{eqn:Bregman-Holder-score-phi}
 \phi(z)=-\kappa^{(1+\gamma)/\kappa}|z-1+1/\kappa|^{(1+\gamma)/\kappa}
 \mathrm{sign}(z-1+1/\kappa),
\end{align}
where $\mathrm{sign}(z)$ is the sign function 
taking $z/|z|$ for $z\neq0$ and $0$ for $z=0$. 
In Section \ref{subsec:Asymptotically_unbiased_estimation_for_regression_problems}, 
we show a statistical interpretation of the composite scores included
in the intersection of Bregman scores and H\"{o}lder scores.

\section{Affine invariance of H\"{o}lder divergence}
\label{sec:Affine_invariance_Holder-div}

Affine transformation of the observed data is often used in statistical analysis. 
Let $\Omega=\Rbb^d,\, \mathcal{B}$ be the Borel set of $\Omega$, and $m$ be the Lebesgue
measure on $(\Omega,\mathcal{B})$. 
The affine transformation is defined as the map $\omega\mapsto\sigma^{-1}(\omega-\mu)$ of
$\omega\in\Omega$ with an invertible matrix $\sigma\in\Rbb^{d\times{d}}$ and 
a vector $\mu\in\Rbb^d$. 
The normalization is a typical example of the affine transformation. 
For the observed data $\omega_1,\ldots,\omega_n\in\Rbb^d$, 
let the vector $\mu$ be the sample mean of the observations, and 
the matrix $\sigma$ be the diagonal matrix such that the $k$-th diagonal element is 
equal to the sample-based standard deviation of the $k$-th component of the observed data. 
Then, each element of the transformed data,
$\sigma^{-1}(\omega_1-\mu),\ldots,\sigma^{-1}(\omega_n-\mu)$, has zero sample-mean and
unit sample-variance. 
This transformation enables the fair comparison of the intensity of each component 
in statistical sense. 
As another benefit, the normalization often makes the numerical computation stable. 

The affine transformation of data, $\omega\mapsto\sigma^{-1}(\omega-\mu)$, 
induces the transformation of the probability density, 
\begin{align*}
 p(\omega)\mapsto{}p_{\sigma,\mu}(\omega)=|\!\det{\sigma}|p(\sigma{\omega}+\mu). 
\end{align*}
Let $q$ be a statistical model to estimate the probability density $p$. 
Then, the statistical model for the affine transformed data is given as $q_{\sigma,\mu}$. 
Let $\widehat{q}$ be the estimator of $p$ based on the original data
$\{\omega_1,\ldots,\omega_n\}$,
and $\widehat{q_{\sigma,\mu}}$ be the estimator based on the transformed data, 
$\{\sigma^{-1}(\omega_1-\mu),\ldots,\sigma^{-1}(\omega_n-\mu)\}$. 
It will be natural to require that the estimator is transformed in a consistent way, 
when the data is transformed, i.e., the equality 
\begin{align}
 \label{eqn:equivariant-estimator}
 (\,\widehat{q}\,)_{\sigma,\mu}=\widehat{q_{\sigma,\mu}}
\end{align}
should hold. 
The estimators enjoying \eqref{eqn:equivariant-estimator} do not essentially depend on the
choice of the units in the measurement. 
In the present paper, the estimator satisfying \eqref{eqn:equivariant-estimator} is called 
the \emph{affine invariant estimator}.
In a formal mathematical description, the term \emph{equivariant estimator} is used to denote
the estimator that changes in a consistent way under data transformations
\cite{berger85:_statis_decis_theor_bayes_analy}. 

A simple way of obtaining the affine invariant estimator is to use 
the composite scores satisfying the equality $S(p,q)=S(p_{\sigma,\mu},q_{\sigma,\mu})$. 
However, the equality is not necessity. 
In the below, we introduce composite scores and associated divergences 
that provide the affine invariant estimator. 

\begin{definition}[affine invariant divergence; affine invariant composite score]
 \label{def:affine_invariance}
 Let $S$ be a composite score on $\mathcal{F}$, and $D$ be the associated divergence. 
 The divergence $D(f,g)$ is affine invariant if there exists an $\Rbb_+^\circ$-valued
 function $h(\sigma,\mu)$ of the invertible matrix $\sigma\in\Rbb^{d\times{d}}$ and the
 vector $\mu\in\Rbb^d$ such that the equality 
 \begin{align}
  h(\sigma,\mu)D(p_{\sigma,\mu},q_{\sigma,\mu})=D(p,q)
  \label{eqn:quasi-invariance-divergence}
 \end{align}
 holds for any pair of probability densities $p,q\in\mathcal{P}$ and arbitrary affine 
 transformation with $(\sigma,\mu)$. The function $h$ is called the scale function. 
 The composite score $S$ inducing the affine invariant divergence is 
 called the affine invariant composite score. 
\end{definition}

We briefly prove that the affine invariant composite score provides 
the affine invariant estimator. 
Let $S$ be an affine invariant composite score, and $\widehat{q}$ be the optimum score
estimator obtained by solving the minimization problem $\min_{q\in\mathcal{M}}S(p,q)$ 
on a statistical model $\mathcal{M}$. 
Then, the inequalities, 
\begin{align*}
 D(p,\widehat{q})\leq{}D(p,q)\quad\text{and}\quad
 D(p_{\sigma,\mu},(\,\widehat{q}\,)_{\sigma,\mu})\leq{}D(p_{\sigma,\mu},q_{\sigma,\mu})
\end{align*}
hold for all $q\in\mathcal{M}$. 
On the other hand, $\widehat{q_{\sigma,\mu}}$ is the
minimum solution of $\min_{q_{\sigma,\mu}}D(p_{\sigma,\mu},q_{\sigma,\mu})$, 
when the model $\{q_{\sigma,\mu}\,|\,q\in\mathcal{M}\}$ is used. 
Therefore, the equivariant property \eqref{eqn:equivariant-estimator} holds, if the
optimal solution is unique.

It is straightforward to verify that the H\"{o}lder score is affine invariant. 
Indeed, for the H\"{o}lder divergence $D(p,q)$ with $\gamma>0$, we have 
\begin{align*}
 D(p_{\sigma,\mu},q_{\sigma,\mu})
 &=
 \phi\left(
 \frac{\<p_{\sigma,\mu}q_{\sigma,\mu}^\gamma\>}{\<q_{\sigma,\mu}^{1+\gamma}\>}
 \right)\<q_{\sigma,\mu}^{1+\gamma}\>+\<p_{\sigma,\mu}^{1+\gamma}\>\\
 &=
 \phi\left( \frac{|\det\sigma|^{\gamma}\<pq^\gamma\>}{|\det\sigma|^{\gamma}\<q^{1+\gamma}\>} \right)
 \<q^{1+\gamma}\>|\det\sigma|^{\gamma}+\<p^{1+\gamma}\> |\det\sigma|^{\gamma}\\
 &=|\det\sigma|^{\gamma} D(p,q). 
\end{align*}
Therefore, the scale function is given as $h(\sigma,\mu)=|\!\det\sigma|^{-\gamma}$. 
In the same way, we can confirm that the KL divergence is also affine invariant with the 
scale function $h(\sigma,\mu)=1$. 
This result indicates that the optimum score estimator using H\"{o}lder score provides the
affine invariant estimator. 

Conversely, we prove that the H\"{o}lder score is characterized by the affine invariance. 
In the beginning, let us introduce some assumptions. 
\begin{assumption}
 [basic assumption on $\Omega$ and $\mathcal{F}$]
 \label{assumption:measure-space_F}
 Let $\Omega=\Rbb^d,\, \mathcal{B}$ be the Borel set of $\Omega$, 
 and $m:\mathcal{B}\rightarrow\Rbb_+$ be the Lebesgue measure on $(\Omega,\mathcal{B})$.  
 The set $\mathcal{F}$ includes the following function set, 
 \begin{align*}
  \mathcal{F}_0:=\bigg\{f\in{L_0^+}\,\bigg|\,
  \begin{array}{l}
   \displaystyle
    \{\omega\in\Omega\,|\,f(\omega)>0\}=(0,1)^d, \,
    \text{and there exist $a, b\in\Rbb$}   \\ 
   \displaystyle
    \text{such that $0<a<f(\omega)<b$ for all $\omega\in(0,1)^d$.}
  \end{array}
  \bigg\}, 
 \end{align*}
i.e., $\mathcal{F}_0\subset\mathcal{F}\subset{}L_0^+$ holds. 
\end{assumption}
The subset $(0,1)^d$ in the above assumption can be replaced with any subset with a finite measure. 
We assume the following conditions on the composite score. 
\begin{assumption} 
 [assumption on the composite score] 
 \label{assumption:psi-U-V}
 For the composite score, we assume three conditions: 
 \begin{description}
  \item[(a)] The composite score $S(f,g)$ has the form of 
	     \begin{align}
	      S(f,g)=\psi(\<fU(g)\>,\<V(g)\>)\quad \text{for all}\ f,g\in\mathcal{F},
	      \label{eqn:special-score}
	     \end{align}
	     where $U$and $V$ are real-valued functions on $\Rbb_+$ and 
	     $\psi$ is a function on a subset of $\Rbb^2$, 
	     i.e., $S$ is the composite score 
	     \eqref{eqn:scoring-rule-expression} with $S_0(\omega,g)=U(g(\omega))$ and
	     $T(c,g)=\psi(c,\<V(g)\>)$. For all $f,g\in\mathcal{F}$, the functions $fU(g)$
	     and $V(g)$ are integrable. 
  \item[(b)] The functions $U,\,V:\Rbb_{+}\rightarrow\Rbb$ 
	     are second order continuously differentiable on $\Rbb_{+}^\circ$, and 
	     they are not constant function on $\Rbb_{+}^\circ$. 
	     For the function $V$, the equality $\displaystyle\lim_{z\searrow0}V(z)=0=V(0)$
	     holds, and the limit $\displaystyle\lim_{z\searrow0}V'(z)$ exists. 
  \item[(c)] Let $D_{U,V}$ and $E_{U,V}$ 
	     be subsets of $\Rbb^2$ defined as
	     \begin{align*}
	      D_{U,V}&=\{(\<fU(g)\>,\<V(g)\>)\in\Rbb^2\,|\,f,g\in\mathcal{F}\},\\
	      E_{U,V}&=\{(\<fU(f)\>,\<V(f)\>)\in\Rbb^2\,|\,f\in\mathcal{F}\},
	     \end{align*}
	     respectively. 
	     For arbitrary point $x\in{D_{U,V}}$, there exists an open 
	     neighbourhood of $x$ on which $\psi$ is second order continuously
	     differentiable. 
	     For arbitrary point $x\in{E_{U,V}}$, there exists an open 
	     neighbourhood of $x$ on which the gradient vector $(\psi_1,\psi_2)$ does not
	     vanish. 
 \end{description}
\end{assumption}

All separable Bregman scores are expressed as the form of \eqref{eqn:special-score}. 
There exist Bregman scores that are not described by \eqref{eqn:special-score}, 
while Bregman scores do not cover all the composite scores \eqref{eqn:special-score}. 
The composite score of the form \eqref{eqn:special-score} is useful in practice, 
since it can be calculated via integrals. 
In Assumption~\ref{assumption:psi-U-V} (b), we assumed $V(0)=0$ in order to guarantee the
integrability of the function whose support is not equal to $\Omega$. 
More precisely, let $Z=\{\omega\in\Rbb^d\,|\,f(\omega)=0\}$ with
$m(Z)=\infty$, then 
$\<V(g)\>=\int_{Z}V(0)dm+\int_{\Omega\setminus{Z}}V(g)dm$ 
will not be finite unless $V(0)=0$. 
In Assumption~\ref{assumption:psi-U-V}\,(c), we assumed that the gradient vector
$(\psi_1,\psi_2)$ does not become the zero vector at $(\<fU(f)\>,\<V(f)\>)$. 
If this assumption does not hold, we need a more involved argument to derive analytic 
properties of the functions $U$ and $V$. 
For the sake of simplicity, we introduce Assumption~\ref{assumption:psi-U-V}\,(c). 

The functions $U$ and $V$ of the affine invariant composite scores are determined by 
Theorem~\ref{theorem:affine-invariance_UV}. 
\begin{theorem}
 \label{theorem:affine-invariance_UV}
 Let $S$ be a composite score that produces an affine invariant divergence. 
 Suppose that Assumption~\ref{assumption:measure-space_F} and 
 Assumption~\ref{assumption:psi-U-V} hold. 
 Then, the functions $U$ and $V$ in \eqref{eqn:special-score} are given as
 $U(z)=z^\gamma+c$ and $V(z)=z^{1+\gamma}$ with $\gamma>0$, or
 $U(z)=-\log{z}+c$ and $V(z)=z$ up to a constant factor, where $c\in\Rbb$ is a constant. 
\end{theorem}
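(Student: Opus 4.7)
My plan is to reduce the affine invariance to a one-parameter scaling condition in $s:=|\!\det\sigma|$ and then to exploit the richness of $\mathcal{F}_0$ to extract a pointwise functional equation that pins down $U$ and $V$. First I apply the change of variables $\omega'=\sigma\omega+\mu$, under which translations drop out and
\begin{align*}
\<p_{\sigma,\mu}U(q_{\sigma,\mu})\>=\<p\,U(sq)\>,\qquad \<V(q_{\sigma,\mu})\>=\tfrac{1}{s}\<V(sq)\>,
\end{align*}
so that \eqref{eqn:quasi-invariance-divergence} becomes, for all $s>0$ and all densities $p,q\in\mathcal{P}\cap\mathcal{F}_0$,
\begin{align*}
h(s)\bigl[\psi(\<pU(sq)\>,\tfrac{1}{s}\<V(sq)\>)-\psi(\<pU(sp)\>,\tfrac{1}{s}\<V(sp)\>)\bigr]=\psi(\<pU(q)\>,\<V(q)\>)-\psi(\<pU(p)\>,\<V(p)\>).
\end{align*}

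Next I specialize $q$ to the uniform density $q_0:=\chi_{(0,1)^d}\in\mathcal{F}_0$. For this choice $\<pU(sq_0)\>=U(s)$ and $\<V(sq_0)\>=V(s)$ for any density $p$ supported in $(0,1)^d$, so the first $\psi$-term above collapses to a function of $s$ alone. Rearrangement yields
\begin{align*}
h(s)\,\psi\bigl(\<pU(sp)\>,\tfrac{1}{s}\<V(sp)\>\bigr)-\psi\bigl(\<pU(p)\>,\<V(p)\>\bigr)=G(s)
\end{align*}
with $G$ depending on $s$ only. Taking a directional derivative in $p$ along a bounded mean-zero perturbation $\eta$ (so that $p+\varepsilon\eta$ stays a density in $\mathcal{F}_0$), and introducing $W(z):=U(z)+zU'(z)=(zU(z))'$ and $X(z):=V'(z)$, reduces the identity to
\begin{align*}
A(s,p)\,W(sz)+B(s,p)\,X(sz)=A'(p)\,W(z)+B'(p)\,X(z)+K(s,p)
\end{align*}
valid for every $z$ in the range of $p$, with coefficients independent of $z$.

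The central step is then to vary $p$ inside $\mathcal{F}_0$ so as to produce two linearly independent such relations, invoking Assumption~\ref{assumption:psi-U-V}(c) to guarantee non-degeneracy of $(\psi_1,\psi_2)$ on $E_{U,V}$. This forces $W(sz)$ and $X(sz)$ to lie in the affine span of $\{W(z),X(z),1\}$, giving
\begin{align*}
W(sz)=a_1(s)W(z)+a_2(s)X(z)+a_3(s),\qquad X(sz)=b_1(s)W(z)+b_2(s)X(z)+b_3(s)
\end{align*}
for scalar functions $a_j,b_j$ of $s$. Computing $W(s_1s_2z)$ in two ways yields the group law $M(s_1s_2)=M(s_1)M(s_2)$ for the $2\times 2$ matrix $M(s)$ formed from $a_1,a_2,b_1,b_2$, together with a cocycle condition on $(a_3,b_3)$. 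The $C^2$-regularity of $U,V$ propagates to smoothness of $M$, so $M(s)=s^A$ for a fixed matrix $A$; case analysis on the Jordan form then yields either $W(z)=(1+\gamma)z^\gamma+\text{const}$ and $X(z)=(1+\gamma)z^\gamma$ with $\gamma>0$, or the degenerate log case $W(z)=-\log z+\text{const}$ and $X(z)=\text{const}$.

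Finally I integrate: the boundary condition $V(0)=0$ from Assumption~\ref{assumption:psi-U-V}(b) fixes $V(z)=z^{1+\gamma}$ or $V(z)=z$ up to positive scaling, and $zU(z)=\int_0^z W$ up to an integration constant yields $U(z)=z^\gamma+c+C/z$ or $U(z)=-\log z+c+C/z$; substituting back into the original invariance and matching $s$-dependence then forces $C=0$. I expect the main obstacle to be the step where $p$ is varied to produce two linearly independent relations rather than a single accidentally consistent one: this requires simultaneously controlling the four quantities $\<pU(p)\>,\<V(p)\>,\<pU(sp)\>,\<V(sp)\>$ through which the coefficients depend on $p$, and leveraging the non-vanishing of $(\psi_1,\psi_2)$ from Assumption~\ref{assumption:psi-U-V}(c) in an essential way.
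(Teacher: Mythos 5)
There is a genuine gap: your derivation never invokes the defining inequality of a composite score, $S(f,g)\geq S(f,f)$, and without it the conclusion you draw from the matrix functional equation does not follow. Affine invariance alone does not couple $U$ to $V$. For instance, take $U(z)=z^{\gamma}$, $V(z)=z^{1+\gamma'}$ with $\gamma\neq\gamma'$, both positive, and $\psi(x,y)=-x^{a}y^{b}$. Writing $s=|\det\sigma|$, one has $\<p_{\sigma,\mu}U(q_{\sigma,\mu})\>=s^{\gamma}\<pq^{\gamma}\>$ and $\<V(q_{\sigma,\mu})\>=s^{\gamma'}\<q^{1+\gamma'}\>$, so $D(p_{\sigma,\mu},q_{\sigma,\mu})=s^{a\gamma+b\gamma'}D(p,q)$ and \eqref{eqn:quasi-invariance-divergence} holds with $h=s^{-(a\gamma+b\gamma')}$, even though $(U,V)$ is not of the form asserted in the theorem; this $S$ simply fails to be a composite score. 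In your notation this example gives $W(z)=(1+\gamma)z^{\gamma}$, $X(z)=(1+\gamma')z^{\gamma'}$ and $M(s)=\mathrm{diag}(s^{\gamma},s^{\gamma'})$, a perfectly valid solution of $M(s_1s_2)=M(s_1)M(s_2)$ with $M(s)=s^{A}$ and $A$ diagonalizable with distinct eigenvalues. Your case analysis silently drops exactly this case, yet it can only be excluded by using the score inequality. The paper isolates that input as Lemma~\ref{lemma:UV-relation}: substituting two-valued step functions on a partition of $(0,1)^d$ into the first-order optimality condition for $S(f_x,f_y)$ at $y=x$, and using the non-vanishing of $(\psi_1,\psi_2)$ on $E_{U,V}$, forces the matrix with rows $(x_iU'(x_i),V'(x_i))$ to be singular, i.e.\ $V'(z)=c\,zU'(z)$. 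With that coupling in hand, a single invariance computation (the mixed derivative $\partial^2/\partial\varepsilon\partial\sigma$ at $(\sigma,\varepsilon)=(1,0)$ with $p=u+\varepsilon v$, $v$ mean-zero on $(0,1)$) yields the one ODE $c_1U(z)+c_2zU'(z)=c_3$, whose solutions are $U(z)=z^{\gamma}+c$ or $U(z)=-\log z+c$, and $V$ then follows from the lemma together with $V(0)=0$ and the existence of $\lim_{z\searrow0}V'(z)$.

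A secondary concern: your ``two linearly independent relations'' step needs control of $(\psi_1,\psi_2)$ at the off-diagonal points $(\<pU(sp)\>,\tfrac1s\<V(sp)\>)$ for $s\neq1$, whereas Assumption~\ref{assumption:psi-U-V}(c) guarantees a non-vanishing gradient only on $E_{U,V}$; the paper avoids this by differentiating only at $\sigma=1$. Your reduction to the scalar $s$, the choice of the uniform $q_0$, and the directional-derivative identity in $p$ are all sound, but as it stands the plan establishes only power/log behaviour of $W$ and $X$ separately, with possibly unrelated exponents, which is strictly weaker than the theorem.
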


The proof is found in Appendix~\ref{appendix:proof_UV}. 
For each possibility of $U$ and $V$, 
the composite score is identified in the following theorem. 
\begin{theorem}
 \label{theorem:psi_form}
 Let $S$ be a composite score that produces an affine invariant divergence. 
 Suppose that Assumption \ref{assumption:measure-space_F} and 
 Assumption \ref{assumption:psi-U-V} hold. 
\begin{enumerate}
 \item \label{case-1}
       Let us define $U(z)=-\log{z}+c$ and $V(z)=z$ in \eqref{eqn:special-score}. 
       Then, the composite score $S(f,g)$ is equivalent in probability with 
       the {\rm{KL}} score. 
 \item \label{case-2}
       For $\gamma>0$, 
       let us define $U(z)=z^\gamma+c$ and $V(z)=z^{1+\gamma}$ in \eqref{eqn:special-score}, 
       and let $\mathcal{F}$ be $\mathcal{F}=L_{1+\gamma}^+$. 
       Then, the composite score $S(f,g)$ is equivalent in probability with 
       the H\"{o}lder score with $\gamma>0$ and a function $\phi$. 
\end{enumerate}
\end{theorem}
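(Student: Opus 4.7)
The plan is to use Theorem~\ref{theorem:affine-invariance_UV}, which pins $U$ and $V$ in~\eqref{eqn:special-score} down to the two stated forms, so that only the outer function $\psi$ is left to identify. Restricting to $p,q\in\mathcal{P}$ and substituting the transformation laws of $\<pU(q)\>$ and $\<V(q)\>$ under $\omega\mapsto\sigma^{-1}(\omega-\mu)$ into the affine-invariance relation $h(\sigma,\mu)\,D(p_{\sigma,\mu},q_{\sigma,\mu})=D(p,q)$ of Definition~\ref{def:affine_invariance} produces a functional equation for $\psi$, which I would solve separately in the two cases.

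For Part~\ref{case-1}, $\<V(q)\>=\<q\>=1$ on $\mathcal{P}$, so $S(p,q)$ collapses to a univariate $\tilde\psi(x):=\psi(x,1)$ evaluated at $x:=-\<p\log q\>+c$. The affine map sends $-\<p\log q\>$ to $-\<p\log q\>-\log|\!\det\sigma|$, so with $\tau:=-\log|\!\det\sigma|\in\Rbb$ ranging freely, affine invariance becomes
\[
h\,[\tilde\psi(x+\tau)-\tilde\psi(x_0+\tau)]=\tilde\psi(x)-\tilde\psi(x_0).
\]
Differentiating in $x$ reduces this to the multiplicative Cauchy equation $\tilde\psi'(x+\tau)=H(\tau)\tilde\psi'(x)$, whose smooth solutions are $\tilde\psi'(x)=A e^{Bx}$. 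The composite-score inequality then forces $A>0$, so $\tilde\psi$ is strictly increasing in $x$, and $S(p,q)$ is a strictly increasing function of $-\<p\log q\>+c$, which differs from the KL score $\<-p\log q+q\>$ only by a constant on $\mathcal{P}$.

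For Part~\ref{case-2}, I would set $\tilde u:=\<pq^\gamma\>$, $v:=\<q^{1+\gamma}\>$, $\Psi(\tilde u,v):=\psi(\tilde u+c,v)$, and $v_0:=\<p^{1+\gamma}\>$, so $S(p,q)=\Psi(\tilde u,v)$ and the diagonal $p=q$ corresponds to $\tilde u=v=v_0$. The affine map multiplies both $\tilde u$ and $v$ by $t:=|\!\det\sigma|^\gamma\in\Rbb_+^\circ$, and affine invariance reads
\[
h\,[\Psi(t\tilde u,tv)-\Psi(tv_0,tv_0)]=\Psi(\tilde u,v)-\Psi(v_0,v_0).
\]
Differentiating in $\tilde u$ and applying a standard Cauchy-multiplicative argument in $t$ force $1/h=t^{r+1}$ for some $r\in\Rbb$ and $\Psi_1(\tilde u,v)=v^r\alpha(\tilde u/v)$ for some univariate $\alpha$. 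Integration in $\tilde u$ gives $\Psi(\tilde u,v)=v^{r+1}\Phi(\tilde u/v)+C(v)$; resubstituting reduces the constraint on $C$ to $C(tv)-t^{r+1}C(v)=C(tv_0)-t^{r+1}C(v_0)$, whose differentiation in $v$ yields $C(v)=K v^{r+1}+L$. Absorbing the homogeneous part into $\Phi$,
\[
\Psi(\tilde u,v)=v^{r+1}\tilde\Phi(\tilde u/v)+L.
\]
To identify this with the Hölder form $v\phi(\tilde u/v)$, I would apply the strictly increasing transformation $\xi(z):=\mathrm{sign}(z)|z|^{r+1}$ together with an affine rescaling, and set $\phi(\eta):=\mathrm{sign}(\tilde\Phi(\eta))|\tilde\Phi(\eta)|^{1/(r+1)}$, normalised so that $\phi(1)=-1$; the lower bound $\phi(z)\geq -z^{1+\gamma}$ required in Definition~\ref{def:Holder-score} is inherited from $S(p,q)\geq S(p,p)$ via Hölder's inequality, as in the proof of Theorem~\ref{theorem:Holder-score-well-def}.

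The hard part is the two-variable functional equation in Part~\ref{case-2}: separating the homogeneous piece $v^{r+1}\tilde\Phi(\tilde u/v)$ cleanly from the additive residual $C(v)$, and showing that the homogeneity degree $r+1$ can be absorbed into a strictly increasing transformation to recover the degree-$1$ Hölder form. One must also handle the signs in the signed-power map, rule out the degenerate case $r+1\leq 0$ (where a stray $\log v$ appears or the scaling has the wrong monotonicity to match Hölder), and verify that the constructed $\phi$ satisfies the normalisation $\phi(1)=-1$ and the lower bound of Definition~\ref{def:Holder-score}.
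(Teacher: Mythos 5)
Your reduction of the problem is sound and runs parallel to the paper: Part~\ref{case-1} is correct (in fact the paper proves it more simply, without invoking affine invariance at all, by noting that on $\mathcal{P}$ the score collapses to $\psi(c-\<p\log q\>,1)$ and the composite-score property already forces monotonicity of $\psi(\cdot,1)$), and for Part~\ref{case-2} your global scaling/Cauchy argument in $t=|\!\det\sigma|^\gamma$ reaches the same homogeneous form $\Psi(\tilde u,v)=v^{s}\tilde\Phi(\tilde u/v)+L$, $s=r+1$, that the paper obtains in Lemma~\ref{lemma:psi-phi} by differentiating at $\sigma=1$ and solving the Euler equation \eqref{eqn:affine_invariant_equation_f_C}.

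The genuine gap is everything after that point, which you explicitly defer as ``the hard part'': showing $s>0$, $\tilde\Phi(1)<0$, excluding the logarithmic case $s=0$ (and $r=-1$ in your $C(v)$ step), and establishing the pointwise bound $\tilde\Phi(z)\ge\tilde\Phi(1)z^{(1+\gamma)s}$ for all $z\ge0$. These are precisely what make your signed-power map strictly increasing (note also it should be the $1/(r+1)$-th signed power, not the $(r+1)$-th, and its monotonicity already presupposes $r+1>0$) and what guarantee that the resulting $\phi$ satisfies Definition~\ref{def:Holder-score}; in the paper this is the whole of Lemma~\ref{lemma:holder-equivalence}, the longest piece of the proof. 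Moreover, your one concrete suggestion --- that the lower bound is ``inherited from $S(p,q)\ge S(p,p)$ via H\"older's inequality, as in the proof of Theorem~\ref{theorem:Holder-score-well-def}'' --- runs in the wrong direction: H\"older's inequality is used there to prove \emph{sufficiency} (that a H\"older score is a composite score), whereas here one needs \emph{necessity}, which the paper gets by feeding carefully chosen pairs into $H(f,g)\ge H(f,f)$: uniform densities on disjoint sets to obtain $\phi(0)\ge0>\phi(1)$ and $s\neq0$, near-proportional pairs satisfying \eqref{eqn:special-inequality} to force $s>0$, and the rescaled (non-normalized) $g=f/z_0\in\mathcal{F}$ to force $\phi(z_0)\ge\phi(1)z_0^{(1+\gamma)s}$. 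Since you restrict to $p,q\in\mathcal{P}$ throughout, the extremal configurations needed for that last step (H\"older equality with ratio $z_0\neq1$) do not even exist in $\mathcal{P}\times\mathcal{P}$, so the bound cannot be ``inherited'' as stated; you would have to either exploit the composite-score inequality on $\mathcal{P}\times\mathcal{F}$ as the paper does, or set up a limiting construction of probability pairs approaching H\"older equality at fixed ratio, neither of which appears in the proposal.
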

The proof is found in Appendix~\ref{appendix:subsec:proof-affine-invariant-psi}. 

In the first case of Theorem \ref{theorem:psi_form}, 
the integrability of $f\log{g}$ is assumed for $f,g\in\mathcal{F}$ 
such that $\mathcal{F}_0\subset\mathcal{F}$, 
implying that $\{\omega\in\Omega\,|\,f(\omega)>0\}=(0,1)^d$ holds for $f\in\mathcal{F}$. 

Theorem~\ref{theorem:Bregman-Holder} and Theorem~\ref{theorem:psi_form} 
imply that the density power score is characterized by the differentiable, 
separable and affine invariant Bregman score. 
Indeed, the score of the form \eqref{eqn:special-score} includes the differentiable and separable Bregman score, 
and the affine invariant score of the form \eqref{eqn:special-score} is H\"{o}lder score. 
As shown in Theorem~\ref{theorem:Bregman-Holder},
the intersection of the differentiable and separable Bregman scores and the H\"{o}lder 
scores is the density power score.

%

\section{Applications of H\"{o}lder scores}
\label{sec:Statistical_Inference_using_Holder_score}

We use H\"{o}lder scores for regression and robust estimation, 
and investigate the corresponding statistical properties.

\subsection{Asymptotically unbiased estimation for regression problems}
\label{subsec:Asymptotically_unbiased_estimation_for_regression_problems}
We use a composite score for the estimation of conditional probabilities or regression functions. 
Let $x$ and $y$ be the explanatory variable and objective variable, respectively. 
Suppose that the i.i.d. samples $(x_i,y_i),\,i=1,\ldots,n$ are observed from the joint
probability density $p(y|x)r(x)$, where $p(y|x)$ is the conditional probability density
of $y$ given $x$ and $r(x)$ is the marginal 
probability density of $x$. Our concern is to estimate $p(y|x)$ from the samples, and the 
estimation of the marginal probability $r(x)$ is not required. 

To estimate $p(y|x)$, let us define a statistical model $\mathcal{M}$, which is a set of
conditional probability densities. 
Suppose that $p(y|x)$ is realized by the model $\mathcal{M}$, i.e.,
$p(y|x)\in\mathcal{M}$. 
On each input vector $x$, the discrepancy between $p(y|x)$ and $q(y|x)\in\mathcal{M}$ is measured by  
$S(p(\cdot|x),q(\cdot|x))$, where $S$ is a composite score. 
By averaging the composite score with respect to
the marginal distribution, we obtain the averaged composite score
\begin{align}
 \bar{S}(p,q|r):=\int{}\!S(p(\cdot|x),q(\cdot|x))r(x)dm(x)
 \label{eqn:Holder-regression-estimator}. 
\end{align}
which is regarded as the loss of the estimate
$q(y|x)\in\mathcal{M}$ under the probability density $p(y|x)r(x)$. 
From the definition of the composite score, 
the minimum solution of the averaged composite score with respect to $q\in\mathcal{M}$ is
attained at $q(y|x)=p(y|x)$. 

Let us consider the empirical approximation of $\bar{S}(p,q|r)$ 
in \eqref{eqn:Holder-regression-estimator}. 
If $\bar{S}(p,q|r)$ is represented as the expectation with respect to the joint
probability $p(y|x)r(x)$, $\bar{S}(p,q|r)$ can be approximated by the 
empirical mean of the samples, $\{(x_1,y_1),\ldots,(x_n,y_n)\}$. 
Otherwise, we need an estimate of the conditional probability $p(y|x)$ to obtain an
approximation of $\bar{S}(p,q|r)$. 
Clearly, the later case is not practical, since our purpose is to estimate $p(y|x)$. 

Suppose that for any $r(x)$, the averaged composite score $\bar{S}(p,q|r)$ is represented
as the expectation for the probability $p(y|x)r(x)$. Then, $S$ is a Bregman score, i.e,
$S(p(\cdot|x),q(\cdot|x))$ is expressed as the expectation with respect to
$p(\cdot|x)$. If the Bregman score that is equivalent in probability with the H\"{o}lder
score is used, the affine invariant estimator is obtained for the estimation of the
conditional probability. 
Here, the affine transformation of the objective variable is considered. 

Theorem \ref{theorem:Bregman-Holder} shows that 
the Bregman score that is equivalent in probability with the H\"{o}lder score 
is of the form \eqref{eqn:Bregman-Holder-score}. 
The optimum score estimator using \eqref{eqn:Bregman-Holder-score} is the minimum solution of 
\begin{align}
 \min_{q\in\mathcal{M}}\ 
 \frac{1}{n}\sum_{i=1}^{n}\<q(\cdot|x_i)^{1+\gamma}\>^{\kappa/(1+\gamma)}
 \bigg( 1-\frac{1}{\kappa}-\frac{q(y_i|x_i)^\gamma}{\<q(\cdot|x_i)^{1+\gamma}\>} \bigg), 
 \label{eqn:Holder-score-for-regression}
\end{align}
where $\gamma>0$ and $\kappa\geq1$. 
The composite score \eqref{eqn:Holder-score-for-regression} provides the Fisher consistent
estimator of the conditional probability. 
The estimator with the density power score (resp. $\gamma$-score) is obtained by setting 
$\kappa=1+\gamma$ (resp. $\gamma=1$). 
Though a general family of scores including the density power score and $\gamma$-score was
proposed by \cite{cichocki10:_famil_alpha_beta_gamma_diver}, 
the score \eqref{eqn:Bregman-Holder-score} is different from the existing family.

The estimator \eqref{eqn:Holder-score-for-regression} 
is the equivariant estimator under the affine transformation. 
Provided the data $(x_i,y_i), i=1,\ldots,n$, 
let $(\xi(x_i),\,\sigma^{-1}(y_i-\mu)), i=1,\ldots,n$ 
be the transformed data, 
where $\xi$ is a one-to-one mapping and 
$\sigma^{-1}(y-\mu)$ is the affine transformation of $y$. 
When the model $|\!\det{\sigma}|\,q(\sigma{y}+\mu\,|\,\xi(x))$ defined from 
$q\in\mathcal{M}$ is used to the transformed data, 
the estimator is given by 
$|\!\det{\sigma}|\,\widehat{q}(\sigma{y}+\mu\,|\,\xi(x))$, where
$\widehat{q}(y|x)$ is the estimator obtained by \eqref{eqn:Holder-score-for-regression} 
based on the original data.

\subsection{Robust estimation using H\"{o}lder scores}
\label{subsec:Robust_estimation_using_Holder_score}

The Bregman scores such as the density power scores and $\gamma$-scores are used for 
robust estimation 
\cite{a.98:_robus_effic_estim_minim_densit_power_diver,fujisawa08:_robus}.  
Let us consider the robustness property of H\"{o}lder scores. 
In robust statistics, the main concern is to develop statistical methods that are not
affected by outliers or other small departures from model assumptions. 

The robustness of the estimator is quantified by the breakdown point, influence function
and so forth \cite{Hampel_etal86}. 
Here, the influence function is used to analyze the robustness of the optimum score estimators. 
Let us introduce the influence functions briefly. 
Let $p_\theta(x)$ be a probability density on $\Rbb^d$ with a finite dimensional parameter 
$\theta\in\Theta\subset\Rbb^k$, and $\delta_z(x)$ be the probability density having a
point mass at $x=z$. 
Given the probability density $p_\varepsilon(x)=(1-\varepsilon)p_\theta(x)+\varepsilon\delta_{z}(x)$, 
let $\theta_\varepsilon$ be the minimizer of
$\min_{\bar{\theta}\in\Theta}\, S(p_\varepsilon,p_{\bar{\theta}})$, 
where $S$ is a composite score. 
For $\varepsilon=0$, the optimal solution is $\theta_0=\theta$. 
The parameter $\theta_\varepsilon$ is the optimum score estimator under the contamination
$\delta_z$. 
The influence function of the optimum score estimator against the contamination $\delta_z$
is defined as  
\begin{align*}
 \mathrm{IF}(z;\theta,S)
 =\lim_{\varepsilon\rightarrow+0}\frac{\theta_\varepsilon-\theta}{\varepsilon}. 
\end{align*}
The influence function $\mathrm{IF}(z;\theta,S)$ provides
several measures of the robustness for the optimum score estimator. 
An example is the gross error sensitivity $\sup_{z}\|\mathrm{IF}(z;\theta,S)\|$,
where $\|\cdot\|$ is the Euclidean norm. 
The estimator that uniformly minimizes the gross error sensitivity 
over the parameter space is called the most B(ias)-robust estimator.  
The most B-robust estimator minimizes the worst-case influence of outliers. 
For the one-dimensional normal distribution, the median estimator is the most 
B-robust for the estimation of the mean value \cite{Hampel_etal86}. 
On the other hand, the estimator satisfying 
\begin{align*}
 \lim_{\|z\|\rightarrow\infty}\|\mathrm{IF}(z;\theta,S)\|=0
 \quad \text{for all}\ \theta\in\Theta
\end{align*}
is called the \emph{redescending estimator} \cite{Hampel_etal86,maronna06:_robus_statis}. 
The redescending property is preferable for stable inference, 
since the influence of extreme outliers tends to zero. 
Note that the most B-robust estimator is not necessarily  the redescending estimator, 
and vice versa. 

It is known that under the normal distribution, the $\gamma$-score has the redescending
property, while the density power score does not \cite{fujisawa08:_robus}. 
In the following theorem, we present the necessary and sufficient condition that 
the optimum score estimator using the H\"{o}lder score 
has the redescending property for general statistical models. 
\begin{theorem}
 \label{theorem:redescending_prop}
 Suppose that the function $\phi(z)$ in the H\"{o}lder score is second order continuously
 differentiable around $z=1$. 
 For the statistical model $p_\theta(x), \theta\in\Theta\subset\Rbb^k$, 
 let $s_\theta(x)\in\Rbb^k$ be the score function of the model, i.e., 
 $(s_\theta(x))_i=\frac{\partial}{\partial\theta_i}\log{}p_\theta(x),\, i=1,\ldots,k$. 
 Let us assume the following conditions: 
 \begin{enumerate}
  \item The limiting condition 
	$\displaystyle\lim_{\|z\|\rightarrow\infty}p_\theta(z)=0$ holds for all parameter $\theta$. 
  \item There exists $\gamma>0$ satisfying the followings: 
	\begin{enumerate}
	 \item 	$p_\theta\in{L_{1+\gamma}^{+}}$ holds for all $\theta$. 
	 \item  $\displaystyle\lim_{\|z\|\rightarrow\infty}p_\theta(z)^\gamma{}s_\theta(z)=0$
		holds for all parameter $\theta$. 
	 \item 	Let $I\in\Rbb^{k\times{k}}$ be the Hessian matrix of 
		$\phi(\<p_{\theta^*}p_\theta^\gamma\>/\<p_\theta^{1+\gamma}\>)\<p_\theta^{1+\gamma}\>$
		at $\theta=\theta^*\in\Theta$, i.e., 
		\begin{align}
		 I_{ij}=\frac{\partial^2}{\partial\theta_i\partial\theta_j}
		 \left\{
		 \phi\bigg(
		 \frac{\<p_{\theta^*}p_\theta^\gamma\>}{\<p_\theta^{1+\gamma}\>}
		 \bigg)\<p_\theta^{1+\gamma}\>
		 \right\}\bigg|_{\theta=\theta^*}, 
		 \label{eqn:Hessian_matrix}
		\end{align}
		for $i,j=1,\ldots,k$. 
		The Hessian matrix $I$ is invertible at any $\theta^*\in\Theta$. 
	 \item For any $\theta^*\in\Theta$, 
	       the integral under the measure $m$ 
	       and the differential with respect to $\theta$ 
	       for the functions $\<p_\theta^{1+\gamma}\>$ and $\<p_{\theta^*}p_\theta^{\gamma}\>$
	       are interchangeable in the vicinity of $\theta=\theta^*$. 
	       In addition, 
	       there exists a parameter $\theta$ such that the	integral
	       $\<p_\theta^{1+\gamma}s_\theta\>$ is not equal to the zero vector. 
	\end{enumerate}
 \end{enumerate}
 Then, the optimum score estimator using H\"{o}lder score with $\gamma>0$
 satisfies the redescending property 
 for arbitrary statistical model satisfying the above conditions
 if and only if $\phi''(1)=-\gamma(1+\gamma)$ holds. 
 All such estimators have the same asymptotic variance. 
\end{theorem}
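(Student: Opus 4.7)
The approach is to compute the influence function explicitly via implicit differentiation of the estimating equation and then examine its $\|z\|\to\infty$ limit. Write $G(\bar\theta,\varepsilon):=\nabla_{\bar\theta}S(p_\varepsilon,p_{\bar\theta})$. Using condition 2(d) to interchange differentiation and integration, $G$ takes the explicit form
\[
G = \phi'(A_\varepsilon/B)\bigl[\nabla A_\varepsilon - (A_\varepsilon/B)\,\nabla B\bigr] + \phi(A_\varepsilon/B)\,\nabla B,
\]
where $A_\varepsilon(\bar\theta)=\<p_\varepsilon p_{\bar\theta}^\gamma\>$ and $B(\bar\theta)=\<p_{\bar\theta}^{1+\gamma}\>$. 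Since $\theta_0=\theta$ solves the unperturbed problem, $G(\theta,0)=0$; by invertibility of the Hessian $I$ from condition 2(c), implicit differentiation at $\varepsilon=0$ gives
\[
\mathrm{IF}(z;\theta,S) = -I^{-1}\,\partial_\varepsilon G\big|_{\bar\theta=\theta,\,\varepsilon=0}.
\]

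I would first note that $\phi(z)\geq -z^{1+\gamma}$ together with $\phi(1)=-1$ and smoothness at $z=1$ force the touching identities $\phi'(1) = -(1+\gamma)$ and $\phi''(1) \geq -\gamma(1+\gamma)$. Substituting these values and expanding $\partial_\varepsilon G$ at $\bar\theta=\theta$ — a direct calculation in which the $\phi$-piece and $\phi'$-piece contribute equal and opposite terms proportional to $(R(z)-1)(1+\gamma)^2 J$ that cancel — yields
\[
\partial_\varepsilon G\big|_{0} = \gamma(1+\gamma)\bigl[J - p_\theta(z)^\gamma s_\theta(z)\bigr] - \phi''(1)\bigl(R(z)-1\bigr)\,J,
\]
where $R(z) = p_\theta(z)^\gamma/\<p_\theta^{1+\gamma}\>$ and $J = \<p_\theta^{1+\gamma}\, s_\theta\>$. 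This is the key algebraic step.

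Passing to $\|z\|\to\infty$ and invoking condition (1) ($p_\theta(z)\to 0$, whence $R(z)\to 0$) together with condition 2(b) ($p_\theta(z)^\gamma s_\theta(z)\to 0$) leaves
\[
\lim_{\|z\|\to\infty}\mathrm{IF}(z;\theta,S) \;=\; -\bigl[\gamma(1+\gamma) + \phi''(1)\bigr]\,I^{-1} J.
\]
Because condition 2(d) supplies a parameter with $J\neq 0$, the redescending property holds on every admissible model if and only if $\phi''(1) = -\gamma(1+\gamma)$, which establishes the stated equivalence. Under this equality the formula simplifies to $\mathrm{IF}(z;\theta,S) = \gamma(1+\gamma)\,I^{-1}\bigl[p_\theta(z)^\gamma s_\theta(z) - R(z)\,J\bigr]$; a straightforward expansion of $\partial_i\partial_j\{\phi(u)v\}$ at $\bar\theta=\theta$ (where $u=1$) shows that $I$ depends on $\phi$ only through $\phi(1),\phi'(1),\phi''(1)$, all now fixed, so the influence function — and hence the M-estimator asymptotic variance $\int \mathrm{IF}\,\mathrm{IF}^\top p_\theta\, dm$ — is the same for every admissible $\phi$. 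The hard part will be the bookkeeping of the two-term expansion so the cancellations are transparent, together with verifying that $I|_{\bar\theta=\theta}$ is indeed determined solely by the first three Taylor coefficients of $\phi$ at $1$; the remaining steps reduce to routine chain-rule manipulations.
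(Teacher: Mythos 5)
Your proposal is correct and follows essentially the same route as the paper: the influence function is obtained by the implicit function theorem applied to the stationarity equation, $\phi'(1)=-(1+\gamma)$ is extracted from the tangency of $\phi$ with $-z^{1+\gamma}$ at $z=1$, the limit $\|z\|\to\infty$ is taken using conditions 1 and 2(b) to leave $-[\gamma(1+\gamma)+\phi''(1)]I^{-1}\<p_\theta^{1+\gamma}s_\theta\>$, and 2(c)–(d) give the "only if" direction; your explicit expansion agrees term-by-term with the paper's formula \eqref{eqn:appendix:influence-function}. Your closing observation that both the numerator and the Hessian $I$ depend on $\phi$ only through $\phi(1),\phi'(1),\phi''(1)$ is exactly the "some calculation" the paper invokes for the common asymptotic variance, so no gap remains.
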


The proof is deferred to Appendix~\ref{appendix:proof_redescending_property}. 

The H\"{o}lder score that is equivalent in probability with the $\gamma$-score 
satisfies $\phi''(1)=-\gamma(1+\gamma)$. 
Hence, for general parametric models, the optimum score estimator using $\gamma$-score has
the redescending property. 
The H\"{o}lder scores with $\phi''(1)=-\gamma(1+\gamma)$ include 
non-Bregman scores, implying that non-Bregman scores can be useful for statistical inference. 

The $\gamma$-score is characterized by the following three conditions, 
i) affine invariance, ii) applicability to regression problems, and iii) redescending
property. 
Indeed, the function $\phi$ in \eqref{eqn:Bregman-Holder-score-phi} satisfies
$\phi''(1)=-\gamma(1+\gamma)+(\kappa-1)(1+\gamma)$, and
$\phi''(1)=-\gamma(1+\gamma)$ holds only for $\kappa=1$, i.e., the case of
$\gamma$-score. 
A characterization of $\gamma$-score is also presented in \cite{fujisawa08:_robus}. 
Comparing to the argument in \cite{fujisawa08:_robus}, our characterization is more 
directly connected with the statistical properties of the optimum score estimator.

\section{Conclusion}
\label{sec:conclusion}

We introduced the H\"{o}lder score that is a class of composite scores, 
and presented its characterization based on the affine invariance of the associated
divergence. 
We studied the relation between the H\"{o}lder score and the conventional proper score, i.e.,
the Bregman score, and derived a class of Bregman scores that is represented as the
mixture form of the density power score and $\gamma$-score. 
We also found that the density power score is the intersection of the separable Bregman
scores and H\"{o}lder scores. 
Then, we used H\"{o}lder scores for statistical inference including regression problems and 
robust parameter estimation. 
The H\"{o}lder scores that are applicable to regression problems 
are given by the intersection of Bregman scores and H\"{o}lder scores. 
The H\"{o}lder scores outside of the intersection will not produce asymptotically unbiased
estimators for the regression problems. 
In robust parameter estimation, 
the redescending property was investigated for H\"{o}lder score. 
We proved that the H\"{o}lder score satisfying the mild condition on the function $\phi$
yields the robust estimator against extreme outliers. 
In the class of H\"{o}lder scores, only the $\gamma$-score provides 
the robust and asymptotically unbiased estimator for regression problems. 

As shown in robust estimation in 
Section~\ref{subsec:Robust_estimation_using_Holder_score}, 
the H\"{o}lder score other than Bregman score can be useful for statistical inference. 
In this paper, we focused on composite scores of the form~\eqref{eqn:special-score}. 
An expansion of \eqref{eqn:special-score} may provide a wider class of affine invariant
composite scores. 
The final goal on this line is to specify all the affine invariant composite scores, and
to reveal its statistical properties. 
It is also an interesting future work to identify the composite scores inducing 
equivariant estimators under a data-transformation other than the affine transformation. 
Another interesting research direction is 
to investigate the class of equivariant estimators defined from the proper local scores, 
which depend on the predictive density through its value and the values of its derivatives 
\cite{dawid12:_proper_local_scorin_rules_discr_sampl_spaces,ehm12:_local,parry12:_proper_local_scorin_rules}. 
The proper local scores provide practical estimators under large dimensional statistical
models, since they can be computed without knowledge of the normalizing constant of the
probability densities. 
The invariance of the proper local scores under data-transformations
is an important feature to understand the statistical properties of 
the associated estimators.

\appendix

\section{H\"{o}lder divergence}
\label{appendix:theorem:Holder-score-well-def}

\begin{proof}[proof of Theorem~\ref{theorem:Holder-score-well-def}] 

The H\"{o}lder score with $\gamma=0$ is the KL score, which is a strictly proper score as
shown by many authors. 
Let us consider H\"{o}lder score $S(f,g)$ with $\gamma>0$ defined on
$\mathcal{F}=L_{1+\gamma}^+$. 
Provided $f\in{}\mathcal{F}$ and $g^\gamma\in{}L_{1+1/\gamma}^+$ for $g\in{}\mathcal{F}$, 
the H\"{o}lder's inequality leads to 
\begin{align*}
 \<fg^\gamma\>\leq
 \<f^{1+\gamma}\>^{1/(1+\gamma)}\<g^{1+\gamma}\>^{\gamma/(1+\gamma)}\quad
 \text{for all}\  f,g\in\mathcal{F}. 
\end{align*}
The equality holds if and only if $f$ and $g$ are linearly dependent. 
From the inequality $\phi(z)\geq-z^{1+\gamma}$ for $z\geq0$, we have 
\begin{align*}
 S(f,g)-S(f,f)
 &=
 \phi\left(\frac{\<fg^\gamma\>}{\<g^{1+\gamma}\>}\right)\<g^{1+\gamma}\>+\<f^{1+\gamma}\>\\
 &\geq
 -\left(\frac{\<fg^\gamma\>}{\<g^{1+\gamma}\>}\right)^{1+\gamma} \<g^{1+\gamma}\>+\<f^{1+\gamma}\>\\
 &\geq0. \qquad\qquad\qquad\qquad \text{(H\"{o}lder's inequality)}
\end{align*}
Suppose that $S(p,q)=S(p,p)$ holds for the probability densities $p,q\in\mathcal{P}$. 
Then, the equality of H\"{o}lder's inequality should hold. Therefore, $p$ and $q$ are
linearly dependent, i.e., there exists a constant $c\in\Rbb$ such that $p=cq$ 
holds. For the probability densities, the constant $c$ should be $1$, and we obtain $p=q$. 
\end{proof}

\section{Bregman scores and H\"{o}lder scores}
\label{appendix:proof_Bregman_Holder}

\begin{proof}
 [proof of Theorem~\ref{theorem:Bregman-Holder}]
 We prove the first case. 
 Suppose that there exists a strictly monotone increasing function $\xi$ such that 
\begin{align}
 \label{eqn:Bregman-Holder-equiv}
& \phantom{\Longrightarrow}
 -G(g)-\int{}G_g^*(\omega)(f(\omega)-g(\omega))dm(\omega)
 = -\xi(-\phi(\<fg^\gamma\>/\<g^{1+\gamma}\>)\<g^{1+\gamma}\>)
\end{align}
 for all $f,g\in\mathcal{F}=L_{1+\gamma}^+$. 
 Here, the expression $-\xi(-\phi(\<fg^\gamma\>/\<g^{1+\gamma}\>)\<g^{1+\gamma}\>)$ 
 is used instead of $\xi(\phi(\<fg^\gamma\>/\<g^{1+\gamma}\>)\<g^{1+\gamma}\>)$
 for a simple expression of the potential function. 
 Substituting $f$ into $g$, we have
 $G(f)=\xi(\<f^{1+\gamma}\>)$. 
 For $\delta\in\Rbb$, the function $A(\delta)=\<|f+\delta{h}|^{1+\gamma}\>$ is
 differentiable at $\delta=0$  
 for all $f\in{L_{1+\gamma}^+}$ and all $h\in{L_{1+\gamma}}$, 
 and $A'(0)=(1+\gamma)\<f^\gamma{h}\>$ holds
 \cite[Chap.~8]{fabian01:_funct_analy_infin_dimen_geomet}. 
 In addition, the differentiability of the potential $G(f)$ is assumed. 
 We prove that the function $\xi$ is differentiable on $\Rbb_+^\circ$. 
 Let $a\in\Rbb$ be a real number with a small  absolute value, and 
 let us define $g=(1+a)f\in\mathcal{F}$ for a given $f\in\mathcal{F}$. 
 Then, $(1-\varepsilon)f+\varepsilon{g}=(1+a\varepsilon)f\in\mathcal{F}$ holds for
 $\varepsilon$ 
 with $|\varepsilon|<\delta$, where $\delta$ is a small positive constant. 
 Let the function $A(\varepsilon)$ be
 $A(\varepsilon)=G((1-\varepsilon)f+\varepsilon{g}) =
 \xi((1+a\varepsilon)^{1+\gamma}\<f^{1+\gamma}\>)$. 
 For all $f\in\mathcal{F}$, $A(\varepsilon)$ is differentiable at $\varepsilon=0$. 
 This implies that  $\xi(z)$ is differentiable for $z>0$. 
 
 We specify the expression of the function $\xi$.
 The (sub)gradient of $G(g)=\xi(\<g^{1+\gamma}\>)$ at $g\in\mathcal{F}$ is given as  
 \begin{align*}
  G_g^*(\omega)=(1+\gamma)\xi'(\<g^{1+\gamma}\>)g^\gamma(\omega). 
 \end{align*}
 Let $x=\<g^{1+\gamma}\>$ and $z=\<fg^\gamma\>/\<g^{1+\gamma}\>$ for $f,g\in\mathcal{F}$. 
 Then, $(x,z)$ can take any point in $\Rbb_+^\circ\times\Rbb_+^\circ$. 
 The equation \eqref{eqn:Bregman-Holder-equiv} is rewritten as 
 \begin{align*}
  \xi(x)+(1+\gamma)\xi'(x)(xz-x)=\xi(-\phi(z)x). 
 \end{align*}
 The continuous function $\phi$ satisfies
 the conditions in Definition~\ref{def:Holder-score}, i.e., 
 $\phi(1)=-1$ and $\phi(z)\geq-z^{1+\gamma}$ for $z\geq0$. 
 Hence, there exists a real number $z_0$ such that $0\leq{}z_0<1$ and $\phi(z_0)=0$. 
 Substituting $z=z_0$, we obtain the differential equation of $\xi(x)$, 
 \begin{align*}
  \xi(x)+(1+\gamma)(z_0-1)x\xi'(x)=\xi(0). 
 \end{align*} 
 The solution is given as 
 \begin{align*}
  \xi(x)=\xi(0)+c x^{1/((1+\gamma)(1-z_0))}, 
 \end{align*}
 where $c$ is a positive constant. 
 For $\kappa=1/(1-z_0)\geq1$, we have  
 $G(f)=\<f^{1+\gamma}\>^{\kappa/(1+\gamma)}$ up to an affine transformation with a
 positive factor. 
 Note that $\<f^{1+\gamma}\>^{\kappa/(1+\gamma)}$ with $\gamma>0$ and $\kappa\geq1$ is
 convex on $\mathcal{F}$ and strictly convex on $\mathcal{P}$. 

 Let us consider the second case. 
 Suppose that the potential function $G(f)=\<f^{1+\gamma}\>^{\kappa/(1+\gamma)}$ provides
 a separable Bregman divergence. Then, $\kappa$ should be $1+\gamma$. 

\end{proof}

\section{Affine invariant divergences}
\label{appendix:proof_affine_invariance}

Let $\Omega=\Rbb^d,\, \mathcal{B}$ be the Borel set of $\Omega$, and
$m:\mathcal{B}\rightarrow\Rbb_+$ be the Lebesgue measure on $(\Omega,\mathcal{B})$. 

\subsection{The functions $U$ and $V$}
\label{appendix:proof_UV}
We show the proof of Theorem  \ref{theorem:affine-invariance_UV}. 
Let us consider a necessary condition that the function 
\eqref{eqn:special-score} provides a composite score. 
\begin{lemma}
 \label{lemma:UV-relation}
 Under Assumption \ref{assumption:measure-space_F} and Assumption \ref{assumption:psi-U-V}, 
 the equality 
 \begin{align*}
  V(z)=c\!\int{}\!zU'(z)dz,\quad z>0
 \end{align*}
 holds, 
 where $c\in\Rbb$ is a non-zero constant. 
\end{lemma}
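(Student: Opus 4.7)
The plan is to exploit the first-order optimality condition implicit in the composite-score inequality $S(f,g)\ge S(f,f)$. Because this inequality holds for \emph{all} $f,g\in\mathcal{F}$, the map $g\mapsto S(f,g)$ attains a minimum at $g=f$, and by Assumption~\ref{assumption:psi-U-V} the relevant quantities are smooth enough to differentiate. I would fix $f\in\mathcal{F}_0$, so that $0<a<f(\omega)<b$ on $(0,1)^d$, and consider variations $g=f+\varepsilon h$ for bounded measurable $h$ supported in $(0,1)^d$; for small $\varepsilon$, $g$ stays in $\mathcal{F}_0\subset\mathcal{F}$. Differentiating $\varepsilon\mapsto\psi\bigl(\<fU(f+\varepsilon h)\>,\<V(f+\varepsilon h)\>\bigr)$ at $\varepsilon=0$ (the interchange of differentiation and integration is legal because $U',V'$ are continuous on $[a,b]$ and $h$ is bounded), I obtain
\begin{align*}
\psi_1\!\int_{(0,1)^d}\! f(\omega)U'(f(\omega))h(\omega)\,dm(\omega)+\psi_2\!\int_{(0,1)^d}\! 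V'(f(\omega))h(\omega)\,dm(\omega)=0,
\end{align*}
where $\psi_1,\psi_2$ are evaluated at the point $(\<fU(f)\>,\<V(f)\>)\in E_{U,V}$.

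Since this vanishes for every bounded measurable $h$ supported in $(0,1)^d$, the fundamental lemma of the calculus of variations forces
\begin{align*}
\psi_1\, f(\omega)U'(f(\omega))+\psi_2\, V'(f(\omega))=0\quad\text{for a.e.\ }\omega\in(0,1)^d.
\end{align*}
Choosing $f\in\mathcal{F}_0$ whose range on $(0,1)^d$ is an open interval $I\subset\Rbb_+^\circ$, the identity translates to
\begin{align*}
\psi_1(f)\,zU'(z)+\psi_2(f)\,V'(z)=0\qquad(z\in I).
\end{align*}
Assumption~\ref{assumption:psi-U-V}\,(c) guarantees that the vector $(\psi_1(f),\psi_2(f))$ is nonzero, so for any $z_1,z_2\in I$ the row vectors $(z_kU'(z_k),V'(z_k))$ both lie in a common one-dimensional kernel, giving $z_1U'(z_1)V'(z_2)=z_2U'(z_2)V'(z_1)$. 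Varying $f$ so that $I$ ranges over all bounded open subintervals of $\Rbb_+^\circ$, I conclude this equality holds for every $z_1,z_2>0$.

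Because Assumption~\ref{assumption:psi-U-V}\,(b) forbids $U$ from being constant, there exists $z_0>0$ with $U'(z_0)\neq0$, hence $z_0U'(z_0)\neq0$. Setting $c=V'(z_0)/(z_0U'(z_0))$ and taking $z_2=z_0$ in the identity gives $V'(z)=c\,zU'(z)$ for all $z>0$. Integrating and using the normalization $\lim_{z\searrow0}V(z)=0=V(0)$ from Assumption~\ref{assumption:psi-U-V}\,(b) yields $V(z)=c\!\int\! zU'(z)\,dz$. Non-vanishing of $c$ follows because $c=0$ would force $V'\equiv0$, hence $V\equiv V(0)=0$, contradicting the assumption that $V$ is not constant on $\Rbb_+^\circ$.

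The main obstacle I anticipate is the first reduction: ensuring that $\mathcal{F}_0$ is large enough to supply both (i) perturbations $h$ rich enough to apply the vanishing-integral lemma \emph{and} (ii) functions $f$ with prescribed ranges so that the pointwise identity may be read off on any open subinterval of $\Rbb_+^\circ$. Both ultimately rely on the explicit flexibility built into $\mathcal{F}_0$ by Assumption~\ref{assumption:measure-space_F} together with the smoothness statements of Assumption~\ref{assumption:psi-U-V}\,(b)--(c); the remaining steps reduce to a one-variable ODE argument.
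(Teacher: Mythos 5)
Your proof is correct and follows essentially the same route as the paper's: both exploit the first-order optimality of $g\mapsto S(f,g)$ at $g=f$ together with the non-vanishing gradient of $\psi$ at points of $E_{U,V}$ (Assumption~\ref{assumption:psi-U-V}\,(c)) to force linear dependence of the vectors $(zU'(z),V'(z))$ at arbitrary pairs of points, whence $V'(z)=c\,zU'(z)$ with $c\neq0$ by non-constancy of $V$. The only cosmetic difference is that the paper realizes the variation with two-valued step functions $f_x$ on a partition of $(0,1)^d$ and a $2\times2$ determinant, while you use general bounded perturbations $f+\varepsilon h$ and the fundamental lemma of the calculus of variations before reading off the pointwise identity on interval-valued $f$.
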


\begin{proof}
 [Proof of Lemma \ref{lemma:UV-relation}]
 Let $A$ and $B$ be disjoint measurable subsets of $(0,1)^d$ 
 such that $A\cup{B}=(0,1)^d$, and $m(A)$ and $m(B)$ are positive. 
 For $x=(x_1,x_2)\in\Rbb_+^\circ\times\Rbb_+^\circ$, 
 let us define the function class $f_x\in\mathcal{F}_0\subset\mathcal{F}$ as 
\begin{align*}
 f_x(\omega)=
 \begin{cases}
  x_1, & \omega\in{A},   \\
  x_2, & \omega\in{B},   \\
  \,0, & \text{otherwise}. 
 \end{cases}
\end{align*}
 For $x=(x_1,x_2)$ and $y=(y_1,y_2)$, we have 
\begin{align*}
 \<f_x{}U(f_y)\> &= x_1{}U(y_1)m(A)+x_2{}U(y_2)m(B),\\
 \<V(f_y)\>  &=  V(y_1)m(A)+V(y_2)m(B). 
\end{align*} 
 Since $S$ is a composite score, the inequality
\begin{align*}
& \phantom{\geq}
 \psi(x_1{}U(y_1)m(A)+x_2{}U(y_2)m(B),V(y_1)m(A)+V(y_2)m(B))\\
& \geq 
 \psi(x_1{}U(x_1)m(A)+x_2{}U(x_2)m(B),V(x_1)m(A)+V(x_2)m(B)) 
\end{align*}
 holds for $x_1,x_2,y_1,y_2>0$. 
 Hence, we have 
\begin{align*}
&\phantom{\Longleftrightarrow}
 \frac{\partial}{\partial{y_i}}
 \psi(x_1{}U(y_1)m(A)+x_2{}U(y_2)m(B),V(y_1)m(A)+V(y_2)m(B))\bigg|_{y=x}=0,\\
 &\Longleftrightarrow 
 \psi_1x_1U'(x_1)+\psi_2V'(x_1)=0,\quad
 \psi_1x_2U'(x_2)+\psi_2V'(x_2)=0, 
\end{align*}
for $i=1,2$, where $\psi_i$ is evaluated at $(\<f_xU(f_x)\>,\<V(f_x)\>)\in\Rbb^2$. 
 From Assumption~\ref{assumption:psi-U-V}(c), 
 the gradient vector of $\psi$ does not vanish. Therefore, the matrix 
\begin{align*}
 \begin{pmatrix}
  x_1U'(x_1) & V'(x_1)\\
  x_2U'(x_2) & V'(x_2)
 \end{pmatrix}
\end{align*}
is not invertible for all $x_1,x_2>0$. 
Thus, the equality 
\begin{align*}
 x_1U'(x_1)V'(x_2)-x_2U'(x_2)V'(x_1)=0
\end{align*}
 should hold for all $x_1,x_2>0$. 
 Since $U$ is not a constant function on $\Rbb_+^\circ$, there exists $x_2>0$ such that
 $U'(x_2)\neq0$. Hence, 
 we obtain the equalities, 
\begin{align*}
 V'(z)=c zU'(z)\ \ \text{and}\ \ V(z)=c\int{}zU'(z)dz,\quad z>0, 
\end{align*}
 with a non-zero constant $c$. 
\end{proof}

Below, we present the proof of Theorem~\ref{theorem:affine-invariance_UV}. 
\begin{proof}
 [Proof of Theorem~\ref{theorem:affine-invariance_UV}]
 We assume $\Omega=\Rbb$. Extension to the multi-dimensional case is
 straightforward. 
 For a positive real number $\sigma$, 
 let us consider the affine transformation $\omega\mapsto\sigma{\omega}$ for $\omega\in\Rbb$. 
 This action induces the transformation of the probability density, 
 $p(\omega)\mapsto{}p_\sigma(\omega)=\sigma{}p(\sigma{}\omega)$. 
 A simple calculation yields that the divergence $D(p_\sigma,q_\sigma)$
 is given as 
 \begin{align*}
 D(p_\sigma,q_\sigma)
 =
  \psi(\<pU(\sigma{}q)\>,\<V(\sigma{q})/\sigma\>)
 -\psi(\<pU(\sigma{}p)\>,\<V(\sigma{p})/\sigma\>). 
 \end{align*}
Let us define the function set $\mathcal{V}$ as 
\begin{align*}
 \mathcal{V}=\left\{v\in{}L_0\,\big|\,v(\omega)=0\ \text{for all}\
 \omega\not\in(0,1),\  \<v\>=0, \,
 \text{and}\ \|v\|_\infty<1 \right\}. 
\end{align*}
Let $u(\omega)$ be the probability density of the uniform distribution on the interval $(0,1)$,
i.e., $u(\omega)$ equals $1$ on $(0,1)$ and $0$ otherwise. 
For $v\in\mathcal{V}$ and $\varepsilon$ with $|\varepsilon|<1$, 
the function $p=u+\varepsilon{v}\in\mathcal{F}_0$ is also a probability density. 
Let $q(\omega)$ be a probability density in $\mathcal{F}_0$. 
We see that 
$D((u+\varepsilon{v})_\sigma,q_\sigma)$ is second order differentiable with respect
to $\sigma$ and $\varepsilon$ in the vicinity of $(\sigma,\varepsilon)=(1,0)$. 
This is confirmed by the dominating convergence theorem. 
Indeed, around $(\sigma,\varepsilon)=(1,0)$, the functions, 
$(u+\varepsilon{v})U(\sigma{q}), V(\sigma{q})/\sigma,
 (u+\varepsilon{v})U(\sigma(u+\varepsilon{v}))$ 
and $V(\sigma(u+\varepsilon{v}))/\sigma$,
and those derivatives are all bounded on the interval $(0,1)$, and they take zero on the
outside of the interval $(0,1)$.  
The scale function $h(\sigma)$ is differentiable around $\sigma=1$
because of 
the differentiability of $D({(u+\varepsilon{v})}_\sigma,q_\sigma)$ and 
the equality $h(\sigma)=D(u+\varepsilon{v},q)/D({(u+\varepsilon{v})}_\sigma,q_\sigma)$. 
The affine invariance of the divergence yields the equality
\begin{align}
 \label{eqn:appendix.affine-invariant-div}
 \frac{\partial}{\partial\sigma}h(\sigma)D((u+\varepsilon{v})_\sigma,q_\sigma)=0
\end{align}
for all $v\in\mathcal{V}$ and arbitrary $\varepsilon$ with $|\varepsilon|<1$. 
Therefore, we have 
\begin{align*}
 \frac{\partial^2}{\partial\varepsilon\partial\sigma}h(\sigma)D((u+\varepsilon{v})_\sigma,q_\sigma)
 \bigg|_{\substack{\sigma=1\\\varepsilon=0}} =0. 
\end{align*}
for all $v\in\mathcal{V}$. 
The equality above produces 
\begin{align*}
 \int_\Omega\{c_1 U(q(\omega))+c_2{}U'(q(\omega))q(\omega)\}v(\omega)dm(\omega)=0, 
\end{align*}
for all $v\in\mathcal{V}$, where $c_1$ and $c_2$ are some constants. 
Therefore, there exists another constant $c_3$ such that the equality
\begin{align*}
 c_1 U(q(\omega))+c_2{}U'(q(\omega))q(\omega) =c_3
\end{align*}
should hold for all $\omega\in(0,1)$. 
Here, $q$ is an arbitrary probability density satisfying the inequality 
$0<a<q(\omega)<b$
on the support $(0,1)$. 
Since $a$ and $b$ can take arbitrary positive numbers such that $0<a<1<b$, 
the function $U$ should satisfy the differential equation 
\begin{align*}
 c_1 U(z)+c_2{}U'(z)z =c_3, \quad z>0. 
\end{align*}
Up to a constant factor, the solution is given as 
$U(z)=z^\gamma+c$ or $U(z)=-\log{z}+c$. 
From Lemma~\ref{lemma:UV-relation}, we conclude that the corresponding $V$ is
 $V(z)=z^{1+\gamma}$ for $U(z)=z^\gamma+c$, and $V(z)=z$ for $U(z)=-\log{z}+c$ up to a constant factor. 
Since the equality $\lim_{z\searrow0}V(z)=V(0)=0$ and the existence of
$\lim_{z\searrow0}V'(z)$ are assumed in Assumption~\ref{assumption:psi-U-V}~(b), the real
number $\gamma$ of $U(z)=z^\gamma+c$ 
should be positive. 
\end{proof}

\subsection{The proof of Theorem~\ref{theorem:psi_form}}
\label{appendix:subsec:proof-affine-invariant-psi}

\subsubsection{proof of the case \ref{case-1}}
Let the functions $U$ and $V$ in \eqref{eqn:special-score} be $U(z)=-\log{z}+c$ and $V(z)=z$. 
\begin{proof}
 [Proof of the case \ref{case-1} in Theorem  \ref{theorem:psi_form}]
 For $U(z)=-\log{z}+c$ and $V(z)=z$, the composite score is given as 
 $S(f,g)=\psi(\<{-f\log{g}+cf}\>,\<{g}\>)$. 
 For the probability densities $p,q\in\mathcal{P}\subset\mathcal{F}$, 
 the composite score satisfies the inequality 
 $\psi(c-\<p\log{q}\>,1)\geq \psi(c-\<p\log{p}\>,1)$. 
 Hence, the function $\psi(\cdot,1)$ should be strictly increasing, 
 since $-\<p\log{q}\>\geq-\<p\log{p}\>$ holds for any distinct $p,q$ in $\mathcal{P}$. 
 Therefore, $S(f,g)$ is equivalent in probability with the {\rm KL} score. 
\end{proof}

\subsubsection{proof of the case \ref{case-2}}

We prepare some lemmas. 
\begin{lemma}
 \label{lemma:psi-phi}
 Suppose $U(z)=z^\gamma+c$ and $V(z)=z^{1+\gamma}$. 
 Under the assumption in Theorem~\ref{theorem:psi_form}, 
 there exists a function $\phi:\Rbb\rightarrow\Rbb$ and $s\in\Rbb$ such that 
 the function $\psi(x,y)$ in \eqref{eqn:special-score} 
 is represented as $\psi(x,y)=\phi((x-c)/y)y^s$  
 up to a monotone transformation. 
\end{lemma}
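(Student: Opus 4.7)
The plan is to recast the affine invariance of $D$ as a one-parameter scaling identity for $\psi$, extract a first-order linear PDE by differentiating at the identity, and solve it by the standard homogeneity ansatz.

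Introduce the shorthands $u=\<pq^\gamma\>$, $v=\<q^{1+\gamma}\>$, $w=\<p^{1+\gamma}\>$ and set $\bar\psi(a,b):=\psi(a+c,b)$. Because $\<pU(q)\>=u+c$ and $\<V(q)\>=v$, the divergence equals $D(p,q)=\bar\psi(u,v)-\bar\psi(w,w)$. A change of variables $\eta=\sigma\omega+\mu$ shows that $\<p_{\sigma,\mu}q_{\sigma,\mu}^\gamma\>=|\det\sigma|^\gamma u$, with the same factor applied to $v$ and $w$ and no dependence on $\mu$. Writing $\lambda:=|\det\sigma|^\gamma$, the scale function must therefore depend only on $\lambda$; call it $h(\lambda)$, with $h(1)=1$. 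Affine invariance becomes the functional equation
\[ h(\lambda)\bigl[\bar\psi(\lambda u,\lambda v)-\bar\psi(\lambda w,\lambda w)\bigr] = \bar\psi(u,v)-\bar\psi(w,w). \]

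Differentiating at $\lambda=1$ and rearranging gives
\[ h'(1)\bar\psi(u,v)+u\bar\psi_1(u,v)+v\bar\psi_2(u,v) \;=\; h'(1)\bar\psi(w,w)+w\bigl[\bar\psi_1(w,w)+\bar\psi_2(w,w)\bigr]. \]
The left side depends only on $(u,v)$ and the right side only on $w$, so on any open region over which $(u,v,w)$ vary independently both sides equal a common constant $K$. Justifying this independence is the main obstacle: pick a reference $(p_0,q_0)\in\mathcal{F}_0\times\mathcal{F}_0$ and perturb $p=p_0+\epsilon h_1$, $q=q_0+\epsilon h_2$ with $\<h_i\>=0$; the linearization $(h_1,h_2)\mapsto(\delta u,\delta v,\delta w)$ is surjective onto $\Rbb^3$ provided the functions $1,\ p_0^\gamma,\ q_0^\gamma,\ p_0 q_0^{\gamma-1}$ are linearly independent on $\Omega$, which is easy to arrange inside $\mathcal{F}_0$. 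The conclusion is the linear PDE
\[ u\bar\psi_u + v\bar\psi_v + h'(1)\bar\psi = K \]
valid on an open subset of $\Rbb_+^2$.

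It remains to solve the PDE. When $h'(1)\neq 0$, set $s:=-h'(1)$; direct substitution of the ansatz $\bar\psi(u,v)=v^s F(u/v)+K/h'(1)$ verifies the equation for arbitrary $C^2$ function $F$, and by the method of characteristics this is the general solution. The strictly increasing shift $t\mapsto t-K/h'(1)$ then yields $\psi(x,y)=F((x-c)/y)y^s$, the required form with $\phi=F$. When $h'(1)=0$ the PDE reduces to $u\bar\psi_u+v\bar\psi_v=K$, whose general solution is $\bar\psi(u,v)=F(u/v)+K\log v$; if $K=0$ this already has the required shape with $s=0$, and if $K\neq 0$ the strictly monotone map $t\mapsto e^{\alpha t}$ (any $\alpha\neq 0$) converts it to $\phi((x-c)/y)y^s$ with $\phi(z)=e^{\alpha F(z)}$ and $s=\alpha K$, completing the characterization.
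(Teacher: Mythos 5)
Your proposal is correct and follows essentially the same route as the paper: differentiate the scale-invariance identity at the identity transformation, use the local independence of $\bigl(\<pq^\gamma\>,\<q^{1+\gamma}\>,\<p^{1+\gamma}\>\bigr)$ to separate variables into an Euler-type PDE $u\bar\psi_u+v\bar\psi_v-s\bar\psi=\mathrm{const}$, and solve it to get $\phi(x/y)y^s$ up to a monotone transformation, handling $s=0$ by exponentiation exactly as the paper does. The only differences are cosmetic (characteristics/homogeneity ansatz instead of the paper's polar-coordinate change of variables, plus a slightly more explicit surjectivity argument for the independence step); just note that in the $s=0$, $K\neq0$ case you should take $\alpha>0$ so the transformation $t\mapsto e^{\alpha t}$ is strictly increasing.
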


\begin{proof}
 [Proof of Lemma \ref{lemma:psi-phi}]
 For $U(z)=z^\gamma+c,\, V(z)=z^{1+\gamma}$, we have 
 $S(p,q)=\psi(\<{pq^\gamma}\>+c,\<q^{1+\gamma}\>)$ for $p,q\in\mathcal{P}$. 
 By replacing $\psi(x+c,y)$ with $\psi(x,y)$, the composite score on $\mathcal{P}$ 
 is represented as $S(p,q)=\psi(\<{pq^\gamma}\>,\<q^{1+\gamma}\>)$. 
 For $p,q\in{}\mathcal{P}\subset{}L_{1+\gamma}^+$, the integrals $\<p^{1+\gamma}\>$ and
 $\<pq^\gamma\>$ are finite. 
 Let us consider the affine transformation $\omega\mapsto{}\sigma{\omega}$ 
 on $\Omega=\Rbb$, where $\sigma>0$. 
 In the same way as the derivation of \eqref{eqn:appendix.affine-invariant-div} in the proof of Theorem
 \ref{theorem:affine-invariance_UV}, we have 
\begin{align*}
 \frac{\partial}{\partial\sigma} 
 h(\sigma)\left\{\psi(\sigma^\gamma{\<pq^\gamma\>},\sigma^\gamma{\<q^{1+\gamma}\>})
 -\psi(\sigma^\gamma{\<p^{1+\gamma}\>},\sigma^\gamma{\<p^{1+\gamma}\>})\right\}
 \bigg|_{\sigma=1}=0, 
\end{align*}
 where $h(\sigma)$ is the scale function. 
Let us define
$x={\<pq^\gamma\>}, y={\<q^{1+\gamma}\>}, z={\<p^{1+\gamma}\>}$, and 
$s=-\frac{d}{d\sigma}\log{h(\sigma)}\big|_{\sigma=1}\in\Rbb$. 
Then, we have 
 \begin{align*}
  -s\psi(x,y)+x\psi_1(x,y)+y\psi_2(x,y)  =  -s\psi(z,z)+z\psi_1(z,z)+z\psi_2(z,z). 
 \end{align*}
 Note that $(x,y,z)$ are independent variables in an open subset of $\Rbb^3$. 
 One can prove this fact by using the implicit function theorem. 
 Thus, the left side of the above equation should be a constant for any $(x,y)$ in an open subset of $\Rbb^2$, 
 since the right side is independent of $(x,y)$. 
 Hence, there exists a real number $b\in\Rbb$ such that 
\begin{align*}
 -s\psi(x,y)+x\psi_1(x,y)+y\psi_2(x,y)=b.  
\end{align*}
 The general solution of this partial differential equation is found from Euler's equation 
 \cite{dawid12:_proper_local_scorin_rules_discr_sampl_spaces}. 
 Here, we solve the above PDE by using the variable change. 
 For the polar coordinate system $(r,\theta)$ of $\Rbb^2$ with $x=r\cos\theta$ and $y=r\sin\theta$, 
 the above PDE is expressed as 
 \begin{align}
  -s\bar{\psi}(r,\theta)+r\frac{\partial}{\partial{r}}\bar{\psi}(r,\theta)=b, 
  \label{eqn:affine_invariant_equation_f_C}
 \end{align}
 where $\bar{\psi}(r,\theta)=\psi(r\cos\theta,r\sin\theta)$. 
 All solutions are given by 
 \begin{align*}
  \bar{\psi}(r,\theta)=
  \bar{\phi}(\theta)r^{s}+ 
  \begin{cases} 
   -b/s,     & s\neq 0,\\
   b\log{r}, & s=0,
  \end{cases}
 \end{align*}
 where $\bar{\phi}(\theta)$ is a function of $\theta$. 
 In the $(x,y)$-coordinate system, there exists a function $\phi$ such that 
 \begin{align*}
  \psi(x,y)=\phi(x/y)y^s
  +
  \begin{cases}
   c_1,        & s\neq0,\\
   c_0\log{y}, & s=0,
  \end{cases}
 \end{align*}
 where $c_0,c_1\in\Rbb$. Without loss of generality we set $c_1=0$. 
 For $s=0$, we have $e^{\psi(x,y)}=e^{\phi(x/y)}y^{c_0}$. 
 Hence $\psi(x,y)$ or $e^{\psi(x,y)}$ can be expressed as the form of 
 $\phi(x/y)y^s$ with $s\in\Rbb$. 
\end{proof}

 Let $U(z)=z^\gamma+c$ and $V(z)=z^{1+\gamma}$ with $\gamma>0$ and $c\in\Rbb$. 
 Then, Lemma~\ref{lemma:psi-phi} ensures that
 for $f\in\mathcal{P}$ and $g\in\mathcal{F}$, 
 the affine invariant composite score is of the form 
 \begin{align}
  H(f,g)=\phi\left(\frac{\<fg^\gamma\>}{\<g^{1+\gamma}\>}\right)\<g^{1+\gamma}\>^s. 
  \label{eqn:general-expression-holder}
 \end{align}
 with $s\in\Rbb$ up to a monotone transformation. 
 The sign of the parameter $s$ is determined by the following lemma. 

\begin{lemma}
 \label{lemma:holder-equivalence}
 For $\gamma>0$, let $\mathcal{F}=L_{1+\gamma}^+$ and
 $\mathcal{P}=\{p\in\mathcal{F}\,|\,\<p\>=1\}$. 
 Suppose that $H(f,g)$ in 
 \eqref{eqn:general-expression-holder} 
 is the composite score on $\mathcal{P}\times\mathcal{F}$, i.e., 
 $H(f,g)\geq{}H(f,f)$ for all $(f,g)\in\mathcal{P}\times\mathcal{F}$, and 
 $H(p,q)=H(p,p)$ for $(p,q)\in\mathcal{P}\times\mathcal{P}$ 
 implies $p=q$. 
 Then, $s>0>\phi(1)$ and $\phi(z)\geq\phi(1)z^{(1+\gamma)s}$ for $z\geq0$ hold. 
\end{lemma}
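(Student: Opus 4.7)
The plan is to feed three test families into the two defining conditions of a composite score, namely $H(f,g) \geq H(f,f)$ on $\mathcal{P}\times\mathcal{F}$ and strict uniqueness on $\mathcal{P}\times\mathcal{P}$, and to combine what each yields. Step 1 uses rescalings $g = \lambda f$ to obtain the pointwise bound; Step 2 uses disjoint-support pairs to obtain $\phi(0) \geq 0$ and $\phi(1) \leq 0$; Step 3 uses strict uniqueness applied to a non-uniform density against the uniform on $(0,1)$ to fix the sign of $s$ and the strictness of $\phi(1)$.

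For Step 1, I would fix $f = p \in \mathcal{P}$ and $g = \lambda p \in \mathcal{F}$ with $\lambda > 0$. A direct computation gives $\<f g^\gamma\>/\<g^{1+\gamma}\> = 1/\lambda$ and $\<g^{1+\gamma}\> = \lambda^{1+\gamma}\<p^{1+\gamma}\>$, so $H(f,g) \geq H(f,f)$ reduces, after cancelling $\<p^{1+\gamma}\>^s$ from both sides, to $\phi(1/\lambda)\lambda^{(1+\gamma)s} \geq \phi(1)$ for every $\lambda > 0$. Substituting $z = 1/\lambda$ and multiplying by $z^{(1+\gamma)s} > 0$ yields $\phi(z) \geq \phi(1) z^{(1+\gamma)s}$ on $(0,\infty)$.

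For Step 2, I would take $f$ the uniform density on $(0,1)$ and $g \in \mathcal{F}$ supported on $(2,3)$, noting that rescaling $g$ lets $y := \<g^{1+\gamma}\>$ range over all of $\Rbb_+^\circ$. Since $\<f g^\gamma\> = 0$, the inequality reduces to $\phi(0) y^s \geq \phi(1)$ for every $y > 0$. Letting $y$ approach the endpoint at which $y^s \to \infty$ forces $\phi(0) \geq 0$ (otherwise the left side is unbounded below), while letting $y$ approach the endpoint at which $y^s \to 0$ forces $\phi(1) \leq 0$, for any $s \neq 0$.

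For Step 3, I would apply strict uniqueness with $q$ the uniform on $(0,1)$ and $p$ any non-uniform probability density supported on $(0,1)$. Then $\<p q^\gamma\> = 1 = \<q^{1+\gamma}\>$, so $H(p,q) = \phi(1)$ while $H(p,p) = \phi(1)\<p^{1+\gamma}\>^s$, giving the strict inequality $\phi(1)\bigl(1 - \<p^{1+\gamma}\>^s\bigr) > 0$. Jensen's inequality on the probability space $\bigl((0,1), m|_{(0,1)}\bigr)$ with the strictly convex map $z \mapsto z^{1+\gamma}$ yields $\<p^{1+\gamma}\> > 1$. The case $s = 0$ makes the product vanish, and the case $s < 0$ forces $\phi(1) > 0$, contradicting Step 2; hence $s > 0$, and the same strict positivity then forces $\phi(1) < 0$. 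The pointwise bound extends to $z = 0$ because $(1+\gamma)s > 0$ makes the right side $0$ while $\phi(0) \geq 0$ is already in hand. The main obstacle is fixing the sign of $s$: neither the score inequality alone nor uniqueness alone determines it, and one must pair the non-strict bound $\phi(1) \leq 0$ from Step 2 with the strict Jensen bound $\<p^{1+\gamma}\> > 1$ used in Step 3.
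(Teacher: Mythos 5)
Your argument is correct, and its first two steps mirror the paper's own constructions: the scaling $g=\lambda f$ is exactly the paper's choice $g=f/z_0$ for the pointwise bound $\phi(z)\geq\phi(1)z^{(1+\gamma)s}$, and the disjoint-support pair is the paper's pair of uniform densities on disjoint sets, yielding $\phi(0)\geq 0\geq\phi(1)$ once $s\neq 0$ is known. Where you genuinely diverge is in pinning down $s$ and the strictness of $\phi(1)<0$. The paper first rules out $\phi(1)=0$ and $s=0$ by noting that with $q$ uniform on $(0,1)^d$ the difference $H(p,q)-H(p,p)$ would vanish for \emph{every} $p$ supported there, contradicting uniqueness, and then determines the sign of $s$ purely from the score inequality: it constructs a non-probability $g=g_0\<fg_0^\gamma\>/\<g_0^{1+\gamma}\>$ linearly independent of $f$, so that $\<fg^\gamma\>/\<g^{1+\gamma}\>=1$ while strict H\"{o}lder gives $\<g^{1+\gamma}\><\<f^{1+\gamma}\>$, and the inequality $\phi(1)\bigl(\<g^{1+\gamma}\>^s/\<f^{1+\gamma}\>^s-1\bigr)\geq 0$ with $\phi(1)<0$ forces $s>0$. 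You instead exploit the strict-uniqueness condition at the single pair (non-uniform $p$, uniform $q$), which gives $\phi(1)\bigl(1-\<p^{1+\gamma}\>^s\bigr)>0$ and simultaneously excludes $\phi(1)=0$ and $s=0$, and then Jensen's inequality ($\<p^{1+\gamma}\>>1$ for non-uniform $p$) paired with the non-strict bound $\phi(1)\leq 0$ from the disjoint-support step excludes $s<0$ and upgrades to $\phi(1)<0$. Your route avoids the rescaled-$g$/strict-H\"{o}lder construction and leans more heavily on the uniqueness axiom plus Jensen, while the paper's route isolates the sign of $s$ from the score inequality alone once $\phi(1)<0$ is in hand; both are sound, and your ordering (establishing $s\neq 0$ from strictness before invoking the conditional conclusions of the disjoint-support step) avoids any circularity. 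Only cosmetic remarks: your test functions live on $(0,1)$ and $(2,3)$, i.e.\ the case $d=1$, so for $\Omega=\Rbb^d$ you should replace them by disjoint cubes such as $(0,1)^d$ and $(2,3)^d$, exactly as the paper does with general disjoint sets $A,B$ of finite positive measure.
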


\begin{proof}
 [Proof of Lemma \ref{lemma:holder-equivalence}]
 Remember that the H\"{o}lder's inequality is represented as 
\begin{align}
 \<fg^\gamma\>\leq  
 \<f^{1+\gamma}\>^{1/(1+\gamma)}
 \<g^{1+\gamma}\>^{\gamma/(1+\gamma)},\quad 
 f,g\in{}\mathcal{F}=L_{1+\gamma}^+. 
 \label{eqn:Holder-inequality}
\end{align}
 The equality holds if and only if $f$ and $g$ are linearly dependent.

 First of all, we prove $\phi(1)\neq0$ and $s\neq0$. 
 Suppose that $\phi(1)=0$ holds. 
 Then, the equality
 \begin{align*}
  H(p,q)-H(p,p)=\phi\left(\frac{\<pq^\gamma\>}{\<q^{1+\gamma}\>}\right)\<q^{1+\gamma}\>^s=0
 \end{align*}
 holds for $p,q\in\mathcal{P}$ if and only if $p=q$. 
 Let $q$ be the probability density of
 the uniform distribution on $(0,1)^d\subset\Omega=\Rbb^d$. 
 Then, arbitrary probability density $p$ whose
 support is included in $(0,1)^d$ satisfies $H(p,q)-H(p,p)=\phi(1)=0$. 
 This contradicts the assumption that $H$ is the composite score. 
 Therefore, $\phi(1)\neq0$ holds. 
 Suppose $s=0$. Then, the equality
 \begin{align*}
  H(p,q)-H(p,p)=\phi\left(\frac{\<pq^\gamma\>}{\<q^{1+\gamma}\>}\right)-\phi(1)=0
 \end{align*}
 holds for $p,q\in\mathcal{P}$ if and only if $p=q$. 
 In the same way as above, 
 setting $q$ as the probability density of the uniform distribution
 on $(0,1)^d$ yields the contradiction. Therefore, we obtain $s\neq0$.

 Next, we prove $\phi(0)\geq0>\phi(1)$. 
 Let $A$ and $B$ be disjoint subsets of $\Omega=\Rbb^d$, and suppose that 
 they have finite positive measures. 
 Let $p$ and $q$ be the probability densities of the uniform distribution 
 on $A$ and $B$, respectively. 
 Then, we have $\<p^{1+\gamma}\>=m(A)^{-\gamma}, \<q^{1+\gamma}\>=m(B)^{-\gamma}$
 and $\<pq^\gamma\>=0$. 
 For the composite score $H(p,q)$, the inequality 
 \begin{align*}
  H(p,q)-H(p,p)=\phi(0)m(B)^{-\gamma{s}}-\phi(1)m(A)^{-\gamma{s}}\geq0
 \end{align*}
 holds. For $\gamma>0$ and $s\neq0$, 
 $m(A)^{-\gamma{s}}$ and $m(B)^{-\gamma{s}}$ can take any positive real numbers
 independently. 
 Hence, the inequality $\phi(0)\geq0\geq\phi(1)$ should hold. 
 This result and $\phi(1)\neq0$ lead to $\phi(0)\geq0>\phi(1)$. 

 Let us consider the sign of $s$. Since $H$ is the composite score, the inequality 
 \begin{align*}
  H(f,g)-H(f,f)
  =
  \left\{
  \phi\left(\frac{\<fg^\gamma\>}{\<g^{1+\gamma}\>}\right)\frac{\<g^{1+\gamma}\>^s}{\<f^{1+\gamma}\>^s}
  -\phi(1)
  \right\}
  \<f^{1+\gamma}\>^s
  \geq0
 \end{align*}
 holds for all $f\in\mathcal{P}$ and $g\in\mathcal{F}$. 
 There exist $f\in\mathcal{P}$ and $g\in\mathcal{F}$ such that 
 \begin{align}
  \label{eqn:special-inequality}
  1=\frac{\<fg^\gamma\>}{\<g^{1+\gamma}\>}
  <\left(\frac{\<f^{1+\gamma}\>}{\<g^{1+\gamma}\>}\right)^{1/(1+\gamma)}
  <\frac{\<f^{1+\gamma}\>}{\<g^{1+\gamma}\>}
 \end{align}
 holds, i.e., the H\"{o}lder's inequality strictly holds with 
 $1=\<fg^\gamma\>/\<g^{1+\gamma}\>$. 
 For example, for linearly independent functions, $f\in\mathcal{P}$ and
 $g_0\in\mathcal{F}$, with $\<fg_0^\gamma\>\neq0$, 
 let $g$ be $g_0\<fg^\gamma_0\>/\<g_0^{1+\gamma}\>$. 
 For $f\in\mathcal{P},\,g\in\mathcal{F}$ satisfying \eqref{eqn:special-inequality}, we have the inequality
 \begin{align*}
  \phi\left(\frac{\<fg^\gamma\>}{\<g^{1+\gamma}\>}\right)\frac{\<g^{1+\gamma}\>^s}{\<f^{1+\gamma}\>^s}-\phi(1)
  =
  \phi(1)\left(\frac{\<g^{1+\gamma}\>^s}{\<f^{1+\gamma}\>^s}-1\right)\geq 0, 
 \end{align*}
 from the non-negativity of $H(f,g)-H(f,f)$ and positivity of $\<f^{1+\gamma}\>$. 
 From $0<\<g^{1+\gamma}\>/\<f^{1+\gamma}\><1,\,\phi(1)<0$ and $s\neq0$,  
 the inequality above holds only when $s>0$.

 Suppose that there exists $z_0>0$ such that $\phi(z_0)<\phi(1)z_0^{(1+\gamma)s}$ 
 holds. Choose $f\in\mathcal{P}$ and $g\in\mathcal{F}$ such that 
 \begin{align*}
  \left(\frac{\<fg^\gamma\>}{\<g^{1+\gamma}\>}\right)^{1+\gamma}
  =\frac{\<f^{1+\gamma}\>}{\<g^{1+\gamma}\>}
  =z_0^{1+\gamma}
 \end{align*}
 holds. This is possible by choosing, say, $g=f/z_0\in\mathcal{F}$ for some $f\in\mathcal{P}$. 
 For such $f$ and $g$, we have
 \begin{align*}
  H(f,g)-H(f,f)
  &=
  \phi(z_0)\<g^{1+\gamma}\>^s-\phi(1)\<f^{1+\gamma}\>^s\\
  &<
  \phi(1)z_0^{(1+\gamma)s}\<g^{1+\gamma}\>^s-\phi(1)\<f^{1+\gamma}\>^s\\
  &=\phi(1)
  \frac{\<f^{1+\gamma}\>^s}{\<g^{1+\gamma}\>^s}\<g^{1+\gamma}\>^s-\phi(1)\<f^{1+\gamma}\>^s\\
  &=0, 
 \end{align*}
 in which $\<g^{1+\gamma}\>>0$ is used. 
 This is the contradiction. 
 Therefore, the inequality
 $\phi(z)\geq\phi(1)z^{(1+\gamma)s}$ should hold for all $z>0$. 
 From $\phi(0)\geq0$ and $(1+\gamma)s>0$, eventually the inequality 
 $\phi(z)\geq\phi(1)z^{(1+\gamma)s}$ should hold for all $z\geq0$. 
\end{proof}

Finally, we prove the case \ref{case-2} of Theorem  \ref{theorem:psi_form}. 
\begin{proof}
[Proof of the case \ref{case-2} in Theorem  \ref{theorem:psi_form}]
From Lemma~\ref{lemma:psi-phi} and Lemma~\ref{lemma:holder-equivalence}, 
the affine invariant composite score is expressed as 
\begin{align*}
H(p,q)=\phi\left(\frac{\<pq^\gamma\>}{\<q^{1+\gamma}\>}\right)\<q^{1+\gamma}\>^s\quad
 \text{for}\ \ p,q\in\mathcal{P},
\end{align*}
with $\gamma>0$, where $\phi(z)\geq\phi(1)z^{(1+\gamma)s}$ for $z\geq0$ and
$s>0>\phi(1)$ hold. 
 The transformation using the strictly increasing function 
 $\xi(H)=|H/\phi(1)|^{1/s}\mathrm{sign}(H)$  
 ensures that the composite score $H$ is equivalent in probability with 
 the H\"{o}lder score with $\gamma>0$. 
 The inequality $\phi(z)\geq\phi(1)z^{(1+\gamma)s}$ with $\phi(1)<0$ is
 transformed into $\phi(z)\geq-z^{1+\gamma}$. 
\end{proof}

\section{Redescending property}
\label{appendix:proof_redescending_property}
For a differentiable real-valued function $f(\theta)$ of $\theta\in\Rbb^k$, 
let $\frac{\partial{f}}{\partial\theta}$ be 
the gradient column vector of $f(\theta)$. 

\begin{proof}
 [Proof of Theorem~\ref{theorem:redescending_prop}]
 Let us define 
 $p_\varepsilon=(1-\varepsilon)p_{\theta^*}+\varepsilon{}\delta_z(x)=p_{\theta^*}+\varepsilon{}(\delta_z(x)-p_{\theta^*}(x))$, 
 and $r_z(x)$ be $r_z(x)=\delta_z(x)-p_{\theta^*}(x)$. 
By using the implicit function theorem to the $\Rbb^k$-valued function 
\begin{align*}
 (\theta,\varepsilon)
 \ \longmapsto\ 
 \frac{\partial}{\partial\theta}
 \left\{
 \phi\left(\frac{\<p_{\varepsilon}p_\theta^\gamma\>}{\<p_\theta^{1+\gamma}\>}\right)\<p_\theta^{1+\gamma}\>
 \right\}
\end{align*}
 around $(\theta,\varepsilon)=(\theta^*,0)$, we obtain 
\begin{align}
 \label{eqn:appendix:influence-function}
 \mathrm{IF}(z,\theta^*,S)=
 -I^{-1}
 \frac{\partial}{\partial\theta}\left\{
 \phi'\bigg(\frac{\<p_{\theta^*}p_\theta^\gamma\>}{\<p_\theta^{1+\gamma}\>}\bigg)
 \<r_zp_\theta^\gamma\>
\right\}
 \bigg|_{\theta=\theta^*}. 
\end{align}
 Hence, the estimator has the redescending property if and only if 
 \begin{align*}
  \lim_{\|z\|\rightarrow\infty}
 \frac{\partial}{\partial\theta}
 \left\{
  \phi'\bigg(\frac{\<p_{\theta^*}p_\theta^\gamma\>}{\<p_\theta^{1+\gamma}\>}\bigg) \<r_zp_\theta^\gamma\>
 \right\}
 \bigg|_{\theta=\theta^*}=0
 \end{align*}
 holds for any $\theta^*\in\Theta$. 
 From the assumption on $\phi$, we have $\phi'(1)=-1-\gamma$. 
 A calculation using $\phi(1)=-1$ and $\phi'(1)=-1-\gamma$ yields that the derivative
 in the above is given as 
\begin{align*}
 \frac{\partial}{\partial\theta}
  \phi'\bigg(\frac{\<p_{\theta^*}p_\theta^\gamma\>}{\<p_\theta^{1+\gamma}\>}\bigg) \<r_zp_\theta^\gamma\>
 \bigg|_{\theta=\theta^*}
&=
 -\phi''(1)\frac{\<r_zp_{\theta^*}^\gamma\>}{\<p_{\theta^*}^{1+\gamma}\>}\int{}p_{\theta^*}(x)^{1+\gamma}s_{\theta^*}(x)dm(x)\\
&\phantom{=} -\gamma(1+\gamma)\int{}r_z(x)p_{\theta^*}(x)^\gamma{}s_{\theta^*}(x)dm(x), 
\end{align*}
 in which the interchangeability of the integral and differential is used. 
 From the assumption, the limiting of $\|z\|\rightarrow\infty$ leads to 
\begin{align*}
&\phantom{=} \lim_{\|z\|\rightarrow\infty}
\frac{\partial}{\partial\theta} 
 \left\{
 \phi'\bigg(\frac{\<p_{\theta^*}p_\theta^\gamma\>}{\<p_\theta^{1+\gamma}\>}\bigg) \<r_zp_\theta^\gamma\>
 \right\}
 \bigg|_{\theta=\theta^*}\\
&=~
 (\phi''(1)+\gamma(1+\gamma))\int{}p_{\theta^*}(x)^{1+\gamma}s_{\theta^*}(x)dm(x). 
\end{align*} 
 The expression above vanishes for all $\theta^*$ if and only if
 the equality $\phi''(1)=-\gamma(1+\gamma)$ holds. 

 The asymptotic variance of the estimator is determined from the influence function. 
 Some calculation shows that 
 H\"{o}lder score affects the influence function via $\phi''(1)$. 
 Hence, the optimum score estimators using H\"{o}lder scores with the same $\phi''(1)$ 
 have the same asymptotic variance. 
\end{proof}

\bibliographystyle{plain}

\end{document}